\newtheorem{theorem}{Theorem}[section]
\newtheorem{lemma}[theorem]{Lemma}
\newtheorem{prop}[theorem]{Proposition}
\newtheorem{definition}{Definition}
\newcommand{\nm}{\noalign{\smallskip}}
\def\ep{\varepsilon}
\newcommand{\bE}{\mathbf{E}}
\newcommand{\Bp}{\mathbf{p}}
\newcommand{\Bq}{\mathbf{q}}
\newcommand{\Bc}{\mathbf{c}}
\newcommand{\bH}{\mathbf{H}}
\newcommand{\Bx}{\mathbf{x}}
\newcommand{\By}{\mathbf{y}}
\newcommand{\RR}{\mathbb{R}}
\newcommand{\Scal}{\mathcal{S}}
\newcommand{\Ecal}{\mathcal{E}}
\newcommand{\Hcal}{\mathcal{H}}
\newcommand{\p}{\partial}
\newcommand{\Ga}{\alpha}
\newcommand{\GG}{\Gamma}
\newcommand{\pd}[2]{\frac {\p #1}{\p #2}}
\newcommand{\ds}{\displaystyle}
\newcommand{\eqnref}[1]{(\ref {#1})}
\renewcommand{\qed}{\hfill $\Box$ \medskip}
\newcommand{\beq}{\begin{equation}}
\newcommand{\eeq}{\end{equation}}
\numberwithin{equation}{section}
\numberwithin{figure}{section}
\begin{document}

\title{Enhancement of near cloaking for the full Maxwell equations\thanks{\footnotesize
This work was supported  by ERC Advanced Grant Project
MULTIMOD--267184, TJ Park Junior Faculty Fellowship, and National Research Foundation through grants
No. 2010-0017532, 2010-0004091, and 2012-003224.}}

\author{Habib Ammari\thanks{\footnotesize Department of Mathematics and Applications, Ecole Normale Sup\'erieure,
45 Rue d'Ulm, 75005 Paris, France (habib.ammari@ens.fr).} \and
Hyeonbae Kang\thanks{Department of Mathematics, Inha University,
Incheon 402-751, Korea (hbkang@inha.ac.kr, hdlee@inha.ac.kr).}
\and Hyundae Lee\footnotemark[3] \and Mikyoung
Lim\thanks{\footnotesize Department of Mathematical Sciences,
Korean Advanced Institute of Science and Technology, Daejeon
305-701, Korea (mklim@kaist.ac.kr, shyu@kaist.ac.kr).} \and
Sanghyeon Yu\footnotemark[4]}

\maketitle

\begin{abstract}

In this paper, we consider near cloaking for the full Maxwell
equations. We extend the method of \cite{AKLL1,AKLL2}, where the
quasi-static limit case and the Helmholtz equation are considered,
to electromagnetic scattering problems. We construct very
effective near cloaking structures for the electromagnetic
scattering problem at a fixed frequency. These new structures are,
before using the transformation optics, layered structures and are
designed so that their first scattering coefficients vanish.
Inside the cloaking region, any target has near-zero scattering
cross section for a band of frequencies. We analytically show that
our new construction significantly enhances the cloaking effect
for the full Maxwell equations.
\end{abstract}

\noindent {\footnotesize {\bf AMS subject classifications.} 35R30,
35B30}

\noindent {\footnotesize {\bf Key words.} cloaking, transformation
optics, Maxwell equations, scattering amplitude, scattering
coefficients}

\section{Introduction}

The cloaking problem is to make a target invisible from far-field
electromagnetic measurements \cite{pendry, leonhardt, miller,
GKLU, BL,broaband}. Many schemes for cloaking are under active
current investigation. These include exterior cloaking in which
the cloaking region is outside the cloaking device
\cite{MNMP_PRSA_05, alu, MN_PRSA_06, bruno,bouchitte, inv}, active
cloaking \cite{GMO_09}, and interior cloaking, which is the focus
of our study.

In interior cloaking, the difficulty is to construct
electromagnetic material parameter distributions of a cloaking
structure such that any target placed inside the structure is
undetectable to  waves. One approach is to use transformation
optics \cite{pendry, GKLU, mit2, crossect, mit1}. It takes
advantage of the fact that the equations governing
electromagnetism have transformation laws under change of
variables. This allows one to design structures that steel waves
around a hidden region, returning them to their original path on
the far side. The change of variables based cloaking method uses a
singular transformation to boost the material properties so that
it makes a cloaking region look like a point to outside
measurements. However, this transformation induces the singularity
of material constants in the transversal direction (also in the
tangential direction in two dimensions), which causes difficulty
both in the theory and applications. To overcome this weakness, so
called `near cloaking' is naturally considered, which is a
regularization or an approximation of singular cloaking. In
\cite{kohn1}, instead of the singular transformation, the authors
use a regular one to push forward the material constant in the
conductivity equation describing the quasi-static limit of
electromagnetism, in which a small ball is blown up to the
cloaking region. In \cite{kohn2}, this regularization point of
view is adopted for the Helmholtz equation. See also \cite{liu,
nguyen}. More recently, Bao and Liu \cite{gang} considered near
cloaking for the full Maxwell equations. They derived sharp
estimates for the boundary effect due to a small inclusion with an
arbitrary material parameters enclosed by a thin high-conducting
layer. Their results show that the near cloaking scheme can be
applied to cloak targets from electromagnetic boundary
measurements.

In \cite{AKLL1,AKLL2} it is shown that the near cloaking, from
measurements of the Dirichlet-to-Neumann map for the conductivity
equation and of the scattering cross section for the Helmholtz
equation, can be drastically enhanced by using multi-layered
structures. The structures are designed so that their generalized
polarization tensors (GPTs) or scattering coefficients vanish (up
to a certain order). GPTs are building blocks of the far-field
behavior of solutions in the quasi-static limits (conductivity
equations) and the scattering coefficients are `Fourier
coefficients' of the scattering amplitude. The multi-layered
structures combined with the usual change of variables
(transformation optics) greatly reduce the visibility of an
object. This fact is also confirmed by numerical experiments
\cite{AKLL3}.

The purpose of this paper is to extend the results of
\cite{AKLL1,AKLL2} to Maxwell's equation and show that the near
cloaking from cross section scattering measurements at a fixed
frequency can be enhanced by using layered structures together
with the change of variables. Again the layered structures are
designed so that their first scattering coefficients vanish. It is
also shown that inside the cloaking region, any target has
near-zero scattering cross section for a band of (low)
frequencies. We analytically show that our new construction
significantly enhances the near cloaking effect for the full
Maxwell equations. It is worth mentioning that even if the basic
scheme of this work is parallel to that of \cite{AKLL2}, the
analysis is much more complicated due to the vectorial nature of
the Maxwell equations.

The paper is organized as follows. In Section 2, we recall some
fundamental results on the scattering problem for the full Maxwell
equations. In Section 3, we introduce the scattering coefficients
of an electromagnetic inclusion and prove that the scattering
coefficients are basically the spherical harmonic expansion
coefficients of the far-field pattern. Section 4 is devoted to the
construction of layered structures with vanishing scattering
coefficients. We also present some numerical examples of the scattering
coefficient vanishing structures. In Section 5, we show that the near cloaking is
enhanced if a scattering coefficient vanishing structure is used.

\section{Multipole solutions to the Maxwell equations}

 In this section, we recall a few fundamental results related to
 electromagnetic scattering, which will be essential
in the sequel.

Consider the time-dependent Maxwell equations
\begin{equation*}
 \ \left \{
 \begin{array}{l}
\nabla\times\Ecal = -\mu\pd{}{t}\Hcal,\\
\nabla\times\Hcal = \epsilon\pd{}{t}\Ecal,
 \end{array}
 \right .
 \end{equation*}
where $\mu$ is the magnetic permeability and $\epsilon$ is the
electric permittivity.

In  the time-harmonic regime, we look for the electromagnetic
fields of the form
\begin{equation*}
 \ \left \{
 \begin{array}{l}
\ds\Hcal(\Bx,t)={\bH}(\Bx)e^{-i\omega t},\\
\ds\Ecal(\Bx,t) = {\bE}(\Bx)e^{-i\omega t},
 \end{array}
 \right.
 \end{equation*}
where $\omega$ is the frequency. The couple $(\bE,\bH)$ is a
solution to the harmonic Maxwell equations
\beq\label{Meqn}
 \ \left \{
 \begin{array}{l}
\ds\nabla\times{\bE} = i\omega\mu{\bH},\\
\ds\nabla\times{\bH} = -i\omega\epsilon{\bE}.
 \end{array}
 \right .
 \eeq
We say that $(\bE,\bH)$ is radiating if it satisfies the
Silver-M\"{u}ller radiation condition:
 $$\lim_{|\Bx|\rightarrow\infty} |\Bx| (\sqrt{\mu}\bH\times\hat{\Bx}-\sqrt{\epsilon}\bE)=0,$$
 where $\hat{\Bx} = \Bx/|\Bx|$.
In the sequel, we set $k=\omega\sqrt{\epsilon\mu}$, which is called the wave number.

For $m=-n,\dots,n$ and $n=1,2,\dots$, set $Y_n^m$ to be the spherical harmonics defined on the
 unit sphere $S$. For a wave number $k>0$, the following function
 \begin{equation}\label{vnm}
 \ds v_{n,m}(k;\Bx) = h_n^{(1)}(k|\Bx|)Y_n^m(\hat{\Bx})
 \end{equation}
 satisfies the Helmholtz equation $\Delta v+k^2 v=0$ in $\RR^3\setminus\{0\}$ and the Sommerfeld
 radiation condition:
$$
\lim_{|\Bx|\rightarrow\infty} |\Bx| (\frac{\partial
v_{n,m}}{\partial |\Bx|} (k;\Bx) - i k v_{n,m}(k;\Bx) ) = 0.
$$
 Here, $h_n^{(1)}$ is the spherical Hankel function of the first kind and order $n$ which satisfies the Sommerfeld
 radiation condition. Similarly,
 $\tilde{v}_{n,m}(\Bx)$ is defined as
 \begin{equation}\label{vtnm}
\ds \tilde{v}_{n,m}(k;\Bx)=j_n(k|\Bx|)Y_n^m(\hat{\Bx}),
  \end{equation}
where $j_n$ is the spherical Bessel function of the first kind.
The function $\tilde{v}_{n,m}$ satisfies the Helmholtz equation in all $\RR^3$.

In the same manner, we can make solutions to the Maxwell system
with the vector version of spherical harmonics. Define the vector
spherical harmonics as
 \begin{equation}\label{vsh}
 \ds\mathbf{U}_{n,m} = \frac{1}{\sqrt{n(n+1)}}\nabla_S Y_{n}^m(\hat{\Bx})
 \ds \quad\mbox{and}\quad \mathbf{V}_{n,m} = \hat{{\Bx}}\times \mathbf{U}_{n,m},
 \end{equation}
 for $m=-n,\dots,n$ and $n=1,2,\dots$.
 Here, $\hat{\Bx}\in S$ and $\nabla_{S}$ denotes the surface gradient on the unit sphere $S$. The vector spherical
 harmonics defined in \eqnref{vsh} form a complete orthogonal basis for $L^2_T(S)$,
 where $L^2_T(S)=\{\mathbf{u}\in(L^2(S))^3\
 |\ \bm{\nu} \cdot\mathbf{u}=0\}$ and $\bm{\nu}$ is the outward unit normal to $S$.

 Multiplying the vector spherical harmonics to the Hankel function, we can make the so-called
 multipole solutions to the Maxwell system. To make the analysis simple, we separate the
 solutions into transverse electric, $(\bE\cdot \Bx)=0$, and transverse magnetic, $(\bH\cdot \Bx)=0$.
 Define the exterior transverse electric multipoles to \eqnref{Meqn} as
\beq\label{multipoles}
 \ \left \{
 \begin{array}{l}
\ds \bE^{TE}_{n,m}(k;\Bx)= -\sqrt{n(n+1)}h_n^{(1)}(k|\Bx|)\mathbf{V}_{n,m}(\hat{\Bx}),\\
\nm
 \ds \bH_{n,m}^{TE}(k;\Bx) =
 -\frac{i}{\omega\mu}\nabla\times\Bigr(-\sqrt{n(n+1)} h_n^{(1)}(k|\Bx|)\mathbf{V}_{n,m}(\hat{\Bx})\Bigr),
 \end{array}
 \right .
 \eeq
 and the exterior transverse magnetic multipoles as
 \beq
 \ \left \{
 \begin{array}{l}
\ds \bE_{n,m}^{TM}(k;\Bx)= \frac{i}{\omega\epsilon}\nabla\times\Bigr(-\sqrt{n(n+1)}h_n^{(1)}(k|\Bx|)\mathbf{V}_{n,m}(\hat{\Bx})\Bigr),\\
\nm
 \ds \bH_{n,m}^{TM}(k;\Bx) =-\sqrt{n(n+1)}
h_n^{(1)}(k|\Bx|)\mathbf{V}_{n,m}(\hat{\Bx}). \end{array}
 \right .
 \eeq
The exterior electric and magnetic multipole satisfies the radiation condition.  By the same way, we define the
interior multipoles $(\widetilde{\bE}_{n,m}^{TE}, \widetilde{\bH}_{n,m}^{TE})$ and $(\widetilde{\bE}_{n,m}^{TM},
\widetilde{\bH}_{n,m}^{TM})$ with $h_n^{(1)}$ replaced by $j_n$, {\it i.e.},
\beq\label{multipoles_int}
 \ \left \{
 \begin{array}{l}
\ds \widetilde{\bE}^{TE}_{n,m}(k;\Bx)= -\sqrt{n(n+1)}j_n^{(1)}(k|\Bx|)\mathbf{V}_{n,m}(\hat{\Bx}),\\
\nm
 \ds \widetilde{\bH}_{n,m}^{TE}(k;\Bx) = -\frac{i}{\omega\mu}\nabla\times\widetilde{\bE}^{TE}_{n,m}(k;\Bx),
 \end{array}
 \right .
 \eeq
and
\beq
 \ \left \{
 \begin{array}{l}
\ds \widetilde{\bH}_{n,m}^{TM}(k;\Bx) =-\sqrt{n(n+1)}j_n^{(1)}(k|\Bx|)\mathbf{V}_{n,m}(\hat{\Bx}),\\
\nm
 \ds \widetilde{\bE}_{n,m}^{TM}(k;\Bx)= \frac{i}{\omega\epsilon}\nabla\times\widetilde{\bH}_{n,m}^{TM}(k;\Bx).
 \end{array}
 \right .
 \eeq
We will sometimes omit the wave number $k$ in the notation of the multipoles.

Note that we have
\begin{align}\label{curlE}
\nabla\times\bE^{TE}_{n,m}(k;\Bx)&=\frac{\sqrt{n(n+1)}}{|\Bx|}\mathcal{H}_n(k|\Bx|)\mathbf{U}_{n,m}(\hat{\Bx})
+\frac{n(n+1)}{|\Bx|}h_n^{(1)}(k_0|\Bx|)Y_{n}^m(\hat{\Bx})\hat{\Bx},\\\label{curlE2}
\nabla\times\widetilde{\bE}_{n,m}^{TE}(k;\Bx)&=\frac{\sqrt{n(n+1)}}{|\Bx|}\mathcal{J}_n(k|\Bx|)\mathbf{U}_{n,m}(\hat{\Bx})
+\frac{n(n+1)}{|\Bx|}j_n^{(1)}(k_0|\Bx|)Y_{n}^m(\hat{\Bx})\hat{\Bx},
\end{align}
where  $\mathcal{H}_n(t)=h_n^{(1)}(t)+t \left( h_n^{(1)} \right)' (t)$ and $\mathcal{J}_n(t)=j_n(t)+t  j_n ' (t)$.

The solutions to the Maxwell system can be represented as
separated variable sums of the multipole solutions, see
\cite[Section 5.3]{N}. With multipole solutions and the Helmholtz
solutions in \eqnref{vnm} and \eqnref{vtnm}, it is also possible
to expand the fundamental solution to the Helmholtz operator. For
 $k>0$, the fundamental solution $\Gamma_k$ to the Helmholtz
operator $(\Delta+k^2)$ in $\RR^3$ is \beq\label{Gk} \ds\Gamma_k
(\Bx) = -\frac{e^{ik|\Bx|}}{4 \pi |\Bx|}. \eeq Let $\mathbf{p}$ be
a fixed vector in $\RR^3$. For $|\Bx| > |\By|$, the following
addition formula holds (see \cite[Section 9.3.3]{P}):
\begin{align}
\ds\Gamma_{k}(\Bx-\By) \mathbf{p} =\nonumber&
 -\sum_{n=1}^{\infty}\frac{i{k}}{n(n+1)} \frac{\epsilon}{\mu} \sum_{m=-n}^n  \bE_{n,m}^{TM}(k;\Bx)
 \overline{ \widetilde{\bE}_{n,m}^{TM}(k;\By)}\cdot \mathbf{p} \\
\ds &+ \sum_{n=1}^{\infty}\frac{i{k}}{n(n+1)} \sum_{m=-n}^n  \bE_{n,m}^{TE}(k;\Bx)
 \overline{\widetilde{\bE}_{n,m}^{TE}(k;\By)}\cdot \mathbf{p} \nonumber\\
\ds&-\frac{i}{k}  \sum_{n=1}^{\infty} \sum_{m=-n}^n \nabla v_{n,m}
(k;\Bx) \overline{\nabla \tilde{v}_{n,m} (k;\By)} \cdot
\mathbf{p},\label{Gkexpan}
\end{align}
with $v_{n,m}$ and $\tilde{v}_{n,m}$ being defined by \eqnref{vnm}
and \eqnref{vtnm}.

Plane wave solutions to the Maxwell equations have the expansion using the multipole solutions as well (see
\cite{K}). The incoming wave $\bE^i(\Bx)=ik(\Bq\times \Bp)\times \Bq e^{ik \Bq\cdot \Bx}$, where  $\Bq \in S$ is
the direction of propagation and the vector $\Bp\in\RR^3$ is the direction of polarization, is expressed as
\begin{equation} \label{expplanewave}
 \ds \bE^i(\Bx) =\ds  ik\sum_{p=1}^{\infty} \frac{4\pi i^p}{\sqrt{p(p+1)}}
 \sum_{q=-p}^{p}   \bigg[  \bigr(\mathbf{V}_{p,q}({\Bq}) \cdot \Bc \bigr) \widetilde{\bE}_{p,q}^{TE}(\Bx)
 -\frac{1}{i \omega \mu} \bigr(\mathbf{U}_{p,q}(\Bq)
  \cdot \Bc\bigr)\widetilde{\bE}_{p,q}^{TM}(\Bx)\bigg],
\end{equation}
where $\mathbf{c}=(\Bq\times \Bp)\times \Bq$.

\section{Scattering coefficients of an inclusion}

Let $D$ be a bounded domain in $\mathbb{R}^3$ with $\mathcal{C}^{1, \Ga}$ boundary for some $\Ga>0$, and
let $(\epsilon_0,\mu_0)$ be the pair
 of electromagnetic parameters (permittivity and permeability) of $\RR^3 \setminus \overline{D}$
 and $(\epsilon_1, \mu_1)$ be that of $D$. We assume that
 $\epsilon_0,\epsilon_1,\mu_0,$ and $\mu_1$ are positive
 constants. Then the permittivity and permeability distributions are given by
\begin{equation*}
\epsilon=\epsilon_0 \chi ( \RR^3 \setminus \overline{D}) +
\epsilon_1 \chi(D) \quad \mbox{and} \quad \mu =\mu_0 \chi ( \RR^3
\setminus \overline{D}) + \mu_1 \chi(D),
\end{equation*}
 where $\chi$ denotes the
characteristic function. In the sequel, we set $k=\omega
\sqrt{\epsilon_1 \mu_1}$ and $k_0=\omega \sqrt{\epsilon_0 \mu_0}$.

For a given solution $(\bE^i,\bH^i)$ to the Maxwell equations
\begin{equation*}
 \ds\ \left \{
 \begin{array}{ll}
\ds\nabla\times{\bE^i} = i\omega\mu_0{\bH^i} \quad &\mbox{in } \mathbb{R}^3,\\
\ds\nabla\times{\bH^i} = -i\omega\epsilon_0 {\bE^i}\quad &\mbox{in } \mathbb{R}^3,
 \end{array}
 \right .
 \end{equation*}
let $(\bE,\bH)$ be the solution to the following Maxwell
equations:
\beq \label{eqn_scatt_field}
 \ds\ \left \{
 \begin{array}{ll}
\ds\nabla\times{\bE} = i\omega\mu{\bH} \quad &\mbox{in } \mathbb{R}^3,\\
\ds\nabla\times{\bH} = -i\omega\epsilon {\bE}\quad &\mbox{in } \mathbb{R}^3,\\
\ds(\bE-\bE^i, \bH-\bH^i)&\mbox{satisfies the Silver-M\"{u}ller
radiation condition}.
 \end{array}
 \right .
 \eeq
We emphasize that along the interface $\p D$, the following transmission condition holds:
\beq\label{transcond}
[\bm{\nu}\times{\bE}]=[\bm{\nu}\times{\bH}]=0 .
\eeq
Here, $[\nu\times{\bE}]$ denotes the jump of $\nu\times{\bE}$ along $\p D$, namely,
 $$[\nu\times{\bE}]=(\nu\times\bE)\bigr|^+_{\p D}-(\nu\times\bE)\bigr|^-_{\p D}.$$

Let  $\nabla_{\p D}\cdot$ denote the surface divergence. We
introduce the function space
$$
TH({\rm div}, \p D):=\Bigr\{ \bm{u} \in L_T^2(\partial D):
\nabla_{\partial D}\cdot \bm{u} \in L^2(\partial D) \Bigr\},
$$
equipped with the norm
$$\|\bm{u}\|_{TH({\rm div}, \p D)}=\|\bm{u}\|_{L^2(\p D)}+\|\nabla_{\p D}\cdot\bm{u}\|_{L^2(\p D)}.$$
For a density $\bm{\varphi} \in TH({\rm div}, \p D)$, we define
the single layer potential associated with the fundamental
solutions $\Gamma_k$ given in \eqnref{Gk} by
$$
\ds\Scal_D^{k}[\bm{\varphi}](\Bx) := \int_{\p D}
\Gamma_k(\Bx-\By) \bm{\varphi}(\By) d \sigma(\By), \quad \Bx \in \mathbb{R}^3.
$$
For a scalar density contained in $L^2(\p D)$, the single layer potential is defined by the same way.
We also define boundary integral operators:
\begin{align*}
\ds\mathcal{L}_D^k[\bm{\varphi}] (\Bx)& := \bigr(\bm{\nu} \times \bigr(k^2 \Scal_D^k[
\bm{\varphi}] + \nabla \Scal_D^k[\nabla_{\p D}\cdot \bm{\varphi}]\bigr) \Bigr)(\Bx),\\
\ds\mathcal{M}_D^k[\bm{\varphi}](\Bx) &:= \mbox{p.v.} \int_{\p D}
\bm{\nu}(\Bx) \times \Bigr( \nabla_{\Bx} \times \bigr(
\Gamma_k(\Bx-\By) \bm{\varphi} (\By)\bigr)  \Bigr) d
\sigma(\By), \quad \Bx \in \p D.
\end{align*}
 In the same way, we define $\Scal_D^{k_0}$,
$\mathcal{L}_D^{k_0}$, and $\mathcal{M}_D^{k_0}$ associated with
$\Gamma_{k_0}$ instead of $\Gamma_k$. Then the solution to
\eqnref{eqn_scatt_field} can be represented as the following: \beq
\label{represent} \ds\bE (\Bx)= \left \{
 \begin{array}{ll}
\ds \bE^i(\Bx) + \mu_0 \nabla \times \Scal_D^{k_0} [\bm{\varphi}](\Bx) +
\nabla\times\nabla\times\Scal_D^{k_0} [\bm{\psi}](\Bx) ,\quad &\Bx \in \mathbb{R}^3 \setminus \overline{D},\\
\nm \ds\mu_1 \nabla \times \Scal_D^{k} [\bm{\varphi}](\Bx) +
\nabla\times\nabla\times\Scal_D^{k} [\bm{\psi}](\Bx) ,\quad &\Bx
\in D,
 \end{array}
 \right .
 \eeq
and
$$ \bH(\Bx) = -\frac{i}{\omega \mu}\bigr(\nabla \times \bE\bigr)(\Bx),\quad \Bx \in \RR^3\setminus \p D,
$$
where the pair $(\bm{\varphi}, \bm{\psi}) \in TH({\rm div}, \p D)
\times TH({\rm div}, \p D)$ is the unique solution to \beq
\label{phi_psi}
\begin{bmatrix}
\ds\frac{\mu_1+\mu_0}{2}I + \mu_1 \mathcal{M}_D^k -\mu_0
\mathcal{M}_D^{k_0} &
\ds\mathcal{L}_D^k - \mathcal{L}_D^{k_0} \\
\ds\mathcal{L}_D^k - \mathcal{L}_D^{k_0} & \ds \left( \frac{k^2}{2
\mu_1} + \frac{k_0^2}{2 \mu_0}\right)I +
\frac{k^2}{\mu_1}\mathcal{M}_D^k -
\frac{k_0^2}{\mu_0}\mathcal{M}_D^{k_0}
\end{bmatrix}
\begin{bmatrix}
\bm{\varphi} \\ \bm{\psi}
\end{bmatrix}
=
\left.\begin{bmatrix}
\bE^i \times \bm{\nu}\\
i \omega \bH^i \times \bm{\nu}
\end{bmatrix}\right|_{\p D}.
\eeq The invertibility of the system of equations (\ref{phi_psi})
on $TH({\rm div}, \p D) \times TH({\rm div}, \p D)$ was proved in
\cite{T}. Moreover, there exists a constant $C=C(\epsilon, \mu,
\omega)$ such that \beq \label{phi_psi_Ein_Hin} \ds\| \bm{\varphi}
\|_{TH({\rm div},\p D)}+ \| \bm{\psi} \|_{TH({\rm div},\p D)} \leq
C\bigr(\| \bE^i \times \bm{\nu} \|_{TH({\rm div},\p D)}+ \|\bH^i
\times \bm{\nu} \|_{TH({\rm div},\p D)}\bigr). \eeq

From \eqnref{Gkexpan} (with $k_0$ in the place of $k$) and
\eqnref{represent} it follows that, for sufficiently large
$|\Bx|$,
\begin{equation}\label{Esexpan}
\ds(\bE-\bE^i)(\Bx)=\sum_{n=1}^{\infty}\frac{i{k_0}}{n(n+1)} \sum_{m=-n}^n  \Bigr(\alpha_{n,m}\bE_{n,m}^{TE}(k_0;\Bx)\ + \ \beta_{n,m}\bE_{n,m}^{TM}(k_0;\Bx)\Bigr),
\end{equation}
where
\begin{align*}
\alpha_{n,m} &= -i \omega \epsilon_0 \mu_0 \int_{\p D} \overline{
\widetilde{\bE}_{n,m}^{TM}}(k_0;\By) \cdot \bm{\varphi}(\By) + k_0^2
\int_{\p D} \overline{\widetilde{\bE}_{n,m}^{TE}}(k_0;\By) \cdot
\bm{\psi}(\By), \\
\beta_{n,m} &=  -i \omega \epsilon_0 \mu_0 \int_{\p D} \overline{ \widetilde{\bE}_{n,m}^{TE}}(k_0;\By) \cdot \bm{\varphi}(\By)
-\omega^2 \epsilon_0^2 \int_{\p D}
\overline{\widetilde{\bE}_{n,m}^{TM}}(k_0;\By) \cdot \bm{\psi}(\By).
\end{align*}

\begin{definition}\label{defW}
Let $(\bm{\varphi}_{p,q}^{TE},\bm{\psi}_{p,q}^{TE})$ be the
solution to \eqnref{phi_psi} when $\bE^i=\widetilde{\bE}_{p,q}^{TE}(k_0; \By)$ and $\bH^i= \widetilde{\bH}_{p,q}^{TE}(k_0; \By)$, and
$(\bm{\varphi}_{p,q}^{TM},\bm{\psi}_{p,q}^{TM})$ when $\bE^i=\widetilde{\bE}_{p,q}^{TM}(k_0; \By)$ and $\bH^i= \widetilde{\bH}_{p,q}^{TM}(k_0; \By)$.
The scattering coefficients
$\Bigr(W_{(n,m)(p,q)}^{TE,TE},W_{(n,m)(p,q)}^{TE,TM},
W_{(n,m)(p,q)}^{TM,TE},W_{(n,m)(p,q)}^{TM,TM}\Bigr)$ associated
with the permittivity and the permeability distributions
$\epsilon, \mu$ and the frequency $\omega$ (or $k,k_0,D$) is defined to be
\begin{align*}
\ds W_{(n,m)(p,q)}^{TE,TE}&=
 -i \omega \epsilon_0 \mu_0 \int_{\p D} \overline{ \widetilde{\bE}_{n,m}^{TM}}(k_0;\By) \cdot \bm{\varphi}_{p,q}^{TE}(\By)\ d\sigma(\By)
+ k_0^2 \int_{\p D} \overline{\widetilde{\bE}_{n,m}^{TE}}(k_0;\By)
\cdot \bm{\psi}_{p,q}^{TE}(\By)\ d\sigma(\By), \\\nonumber \ds
W_{(n,m)(p,q)}^{TE,TM}&=
 -i \omega \epsilon_0 \mu_0 \int_{\p D} \overline{ \widetilde{\bE}_{n,m}^{TM}}(k_0;\By) \cdot \bm{\varphi}_{p,q}^{TM}(\By)\ d\sigma(\By)
+ k_0^2 \int_{\p D} \overline{\widetilde{\bE}_{n,m}^{TE}}(k_0;\By)
\cdot \bm{\psi}_{p,q}^{TM}(\By)\ d\sigma(\By), \\\nonumber \ds
W_{(n,m)(p,q)}^{TM,TE}&=
 -i \omega \epsilon_0 \mu_0 \int_{\p D} \overline{\widetilde{\bE}_{n,m}^{TE}}(k_0;\By) \cdot \bm{\varphi}_{p,q}^{TE}(\By)\ d\sigma(\By)
-\omega^2 \epsilon_0^2 \int_{\p D} \overline{
\widetilde{\bE}_{n,m}^{TM}}(k_0;\By) \cdot \bm{\psi}_{p,q}^{TE}(\By)\
d\sigma(\By), \\\nonumber \ds W_{(n,m)(p,q)}^{TM,TM}&=
 -i \omega \epsilon_0 \mu_0 \int_{\p D} \overline{\widetilde{\bE}_{n,m}^{TE}}(k_0;\By) \cdot \bm{\varphi}_{p,q}^{TM}(\By)\ d\sigma(\By)
-\omega^2 \epsilon_0^2 \int_{\p D} \overline{
\widetilde{\bE}_{n,m}^{TM}}(k_0;\By) \cdot
\bm{\psi}_{p,q}^{TM}(\By)\ d\sigma(\By).
\end{align*}
\end{definition}

As we see it now, the scattering coefficients appear naturally in the expansion of the scattering amplitude.
We first obtain the following estimates of the scattering coefficients.
\begin{lemma}\label{Westimate}
There exists a constant $C$ depending on $(\epsilon, \mu, \omega)$ such that
\beq\label{W_decay}
\left|W_{(n,m)(p,q)}^{TE,TE} [\epsilon, \mu, \omega] \right|
\leq \frac{C^{n+p}}{n^n p^p},
\eeq
for all $n,m,p,q \in \mathbb{N}$. The same estimates hold for $W_{(n,m)(p,q)}^{TE,TM}$, $W_{(n,m)(p,q)}^{TM,TE}$, and $W_{(n,m)(p,q)}^{TM,TM}$.
\end{lemma}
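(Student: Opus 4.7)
\medskip
\noindent\textbf{Proof plan.} The strategy is to read off the bounds directly from the defining integrals by Cauchy--Schwarz, transferring all the $n,p$ dependence onto the explicit multipole factors, while using the a priori estimate \eqnref{phi_psi_Ein_Hin} to control the densities $(\bm{\varphi}_{p,q}^{TE},\bm{\psi}_{p,q}^{TE})$ uniformly. Specifically, applied to the TE--TE coefficient, Cauchy--Schwarz gives
\begin{equation*}
\bigl|W_{(n,m)(p,q)}^{TE,TE}\bigr|
\le C\Bigl(\|\widetilde{\bE}_{n,m}^{TM}(k_0;\cdot)\|_{L^2(\p D)}\,\|\bm{\varphi}_{p,q}^{TE}\|_{L^2(\p D)}
+\|\widetilde{\bE}_{n,m}^{TE}(k_0;\cdot)\|_{L^2(\p D)}\,\|\bm{\psi}_{p,q}^{TE}\|_{L^2(\p D)}\Bigr),
\end{equation*}
and \eqnref{phi_psi_Ein_Hin} together with $\widetilde{\bH}^{TE}_{p,q}=-\frac{i}{\omega\mu_0}\nabla\times\widetilde{\bE}^{TE}_{p,q}$ reduces the factors involving $(p,q)$ to the $TH(\mathrm{div},\p D)$ norms of $\widetilde{\bE}_{p,q}^{TE}\times\bm{\nu}$ and $(\nabla\times\widetilde{\bE}_{p,q}^{TE})\times\bm{\nu}$ on $\p D$. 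The three other coefficients are handled identically after noting that the same norm controls both the $(n,m)$ pairing (through the multipoles appearing in the integrand of Definition~\ref{defW}) and the $(p,q)$ pairing (through the densities).

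The analytic heart of the proof is thus to show that, for any fixed bounded $\p D\subset\RR^3$ (away from the origin, which we arrange by a translation) and any fixed $k_0$, the interior multipoles and their curls satisfy
\begin{equation*}
\|\widetilde{\bE}_{n,m}^{TE}(k_0;\cdot)\|_{L^2(\p D)}
+\|\widetilde{\bE}_{n,m}^{TM}(k_0;\cdot)\|_{L^2(\p D)}
+\|\widetilde{\bE}_{n,m}^{TE}\times\bm{\nu}\|_{TH(\mathrm{div},\p D)}
+\|\widetilde{\bH}_{n,m}^{TE}\times\bm{\nu}\|_{TH(\mathrm{div},\p D)}
\le \frac{C^{n}}{n^{n}}.
\end{equation*}
Using the explicit formulas \eqnref{multipoles_int} and \eqnref{curlE2}, this reduces to pointwise estimates for $j_n(t)$, $t\,j_n'(t)$, and $\mathcal{J}_n(t)$ on the compact interval $t\in k_0[\min_{\p D}|\Bx|,\max_{\p D}|\Bx|]$, combined with the well-known $L^\infty(S)$ bounds for $Y_n^m,\mathbf{U}_{n,m},\mathbf{V}_{n,m}$ (which grow only polynomially in $n$). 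Since $j_n(t)=t^n/(2n+1)!!\bigr(1+O(1/n)\bigr)$ for bounded $t$, the double factorial $(2n+1)!!$ together with Stirling gives $|j_n(t)|\le C^n/n^n$, and the analogous estimate for $j_n'$ and $\mathcal{J}_n$ follows from the recurrence $j_n'(t)=j_{n-1}(t)-\frac{n+1}{t}j_n(t)$. Polynomial prefactors like $\sqrt{n(n+1)}$ in the multipoles or the $|Y_n^m|\le (2n+1)^{1/2}$ bound are absorbed into $C^n$. The surface-divergence part of the $TH(\mathrm{div},\p D)$ norm is handled via the identity $\nabla_{\p D}\cdot(\bE\times\bm{\nu})=\bm{\nu}\cdot(\nabla\times\bE)$, again feeding in \eqnref{curlE2}.

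Assembling the two sides of the Cauchy--Schwarz estimate then yields
\begin{equation*}
\bigl|W_{(n,m)(p,q)}^{TE,TE}\bigr|\le C_1\cdot\frac{C_2^{n}}{n^{n}}\cdot\frac{C_3^{p}}{p^{p}}
\le \frac{C^{n+p}}{n^{n}\,p^{p}}
\end{equation*}
after enlarging $C$ to absorb the constants, and the identical argument yields the same bound for $W^{TE,TM}$, $W^{TM,TE}$, and $W^{TM,TM}$. The main obstacle is purely technical: verifying the factorial decay $|j_n(t)|\lesssim C^n/n^n$ (and the same for $j_n'$, $\mathcal{J}_n$) uniformly on $\p D$, since every other ingredient is either an a priori estimate already cited from the excerpt (\eqnref{phi_psi_Ein_Hin}) or a direct substitution of the formulas \eqnref{multipoles_int}--\eqnref{curlE2}.
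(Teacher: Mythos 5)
Your proposal is correct and follows essentially the same route as the paper: the small-argument/large-order asymptotics of $j_p$ combined with Stirling's formula give the $C^p/p^p$ decay of the incident multipole data on $\p D$, the stability estimate \eqnref{phi_psi_Ein_Hin} transfers this to the densities $(\bm{\varphi}_{p,q},\bm{\psi}_{p,q})$, and the $C^n/n^n$ factor comes from the same Bessel decay of the test multipoles in the defining integrals. The paper compresses this last pairing step into ``from the definition of the scattering coefficients''; your Cauchy--Schwarz elaboration is exactly the implicit content of that sentence.
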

\begin{proof}
Let $(\bm{\varphi},\bm{\psi})$ be the solution to \eqnref{phi_psi}
with $\bE^i(\By) = \widetilde{\bE}_{p,q}^{TE}(k_0;\By)$ and $\bH^i =
-\frac{i}{\omega \mu_0}\nabla \times \bE^i$. Recall that the spherical Bessel function $j_p$ behaves as
$$
j_p(t)=\frac{t^p}{1\cdot 3\cdots (2p+1)}\Bigr(1+O\left(\frac{1}{p}\right)\Bigr)\quad \mbox{as }p\rightarrow\infty,
$$
uniformly on compact subsets of $\RR$. Using Stirling's formula $p
!=\sqrt{2\pi p}(p/e)^p(1+o(1))$, we have
\beq\label{bessel_j_large_n} j_p(t) =
O\left(\frac{C^pt^p}{p^p}\right) \quad \mbox{as } p \rightarrow
\infty, \eeq uniformly on compact subset of $\RR$ with a constant
$C$ independent of $p$. Thus we have
$$
\big\| \bE^i\big\|_{TH({\rm div},\p D)}+
\big\|\bH^i\big\|_{TH({\rm div},\p D)} \leq \frac{{C'}^p}{p^p}
$$
for some constant $C'$. It then follows from \eqnref{phi_psi_Ein_Hin} that
$$
\big\| \bm{\varphi} \big\|_{L^2(\p D)} + \big\|
\bm{\psi}\big\|_{L^2(\p D)} \leq \frac{C^p}{p^p}
$$
for another constant $C$. So we get \eqnref{W_decay} from the definition of
the scattering coefficients.
\end{proof}

Suppose that the incoming wave is of the form
\beq\label{general_incident}
\bE^i(\Bx)= \sum_{p=1}^{\infty}
\sum_{q=-p}^p \Bigr(a_{p,q} \widetilde{\bE}_{p,q}^{TE}(k_0;\Bx) +
b_{p,q} \widetilde{\bE}_{p,q}^{TM}(k_0;\Bx)\Bigr)
\eeq
for some constants $a_{p,q}$ and $b_{p,q}$. Then the solution $(\bm{\varphi},\bm{\psi})$ to
\eqnref{phi_psi} is given by
\begin{align*}
\bm{\varphi} &=\sum_{p=1}^{\infty} \sum_{q=-p}^p \Bigr(a_{p,q}
\bm{\varphi}_{p,q}^{TE} + b_{p,q} \bm{\varphi}_{p,q}^{TM}\Bigr), \\
\bm{\psi} &=\sum_{p=1}^{\infty} \sum_{q=-p}^p \Bigr( a_{p,q}
\bm{\psi}_{p,q}^{TE} + b_{p,q} \bm{\psi}_{p,q}^{TM}\Bigr).
\end{align*}
By \eqnref{Esexpan} and Definition \ref{defW}, the solution
$\bE$ to \eqnref{eqn_scatt_field} can be represented as
\begin{equation}\label{generalexpan}
\ds(\bE-\bE^i)(\Bx)=\sum_{n=1}^{\infty}\frac{i{k_0}}{n(n+1)} \sum_{m=-n}^n
\Bigr(\alpha_{n,m}\bE_{n,m}^{TE}(k_0;\Bx)\ + \ \beta_{n,m}\bE_{n,m}^{TM}(k_0;\Bx)\Bigr), \quad |\Bx| \to \infty,
\end{equation}
where
\begin{equation}\label{alphabeta}
\begin{cases}
\ds\alpha_{n,m} = \sum_{p=1}^{\infty} \sum_{q=-p}^p \left(a_{p,q} W_{(n,m)(p,q)}^{TE,TE}+b_{p,q} W_{(n,m)(p,q)}^{TE,TM}\right),\\
 \ds\beta_{n,m} = \sum_{p=1}^{\infty} \sum_{q=-p}^p \left(a_{p,q} W_{(n,m)(p,q)}^{TM,TE}+b_{p,q} W_{(n,m)(p,q)}^{TM,TM}\right).
\end{cases}
\end{equation}

Using \eqnref{generalexpan}, \eqnref{alphabeta} and the behavior
of the spherical Bessel functions, we can estimate the far-field
pattern of the scattered wave $(\bE-\bE^i)$. The far-field pattern (also called the scattering amplitude) $\mathbf{A}_{\infty}[\epsilon, \mu,\omega]$ is defined by
\beq
\bE(\Bx)-\bE^i(\Bx) = \frac{e^{i k_0 |\Bx|}}{k_0 |\Bx|}
\mathbf{A}_{\infty}[\epsilon,\mu,\omega] (\hat{\Bx}) +
o(|\Bx|^{-1}) \quad \mbox{as }|\Bx| \rightarrow \infty.
\eeq

Since the spherical Bessel function $ h_n^{(1)}$ behaves like
\begin{equation*}
\begin{cases}
\ds h_n^{(1)} (t) \sim \frac{1}{t} e^{it} e^{-i\frac{n+1}{2} \pi} \quad &\mbox{as } t \rightarrow \infty,\\
\ds (h_n^{(1)})'(t) \sim \frac{1}{t} e^{it} e^{-i\frac{n}{2} \pi}\quad &\mbox{as } t \rightarrow \infty,
\end{cases}
\end{equation*}
one can easily see by using \eqnref{curlE} that
$$
\begin{cases}
\ds \bE_{n,m}^{TE}(k_0;\Bx) \sim \frac{e^{i k_0 |\Bx|}}{k_0 |\Bx|} e^{-i\frac{n+1}{2}\pi}\ \bigr( -\sqrt{n(n+1)}\bigr)  \mathbf{V}_{n,m}(\hat{\Bx}) \quad &\mbox{as } |\Bx| \rightarrow \infty ,\\
\ds \bE_{n,m}^{TM}(k_0;\Bx) \sim \frac{e^{i k_0 |\Bx|}}{k_0 |\Bx|} \sqrt{\frac{\mu_0}{\epsilon_0}}e^{-i\frac{n+1}{2} \pi}\bigr(-\sqrt{n(n+1)}\bigr) \mathbf{U}_{n,m}(\hat{\Bx}) \quad &\mbox{as } |\Bx| \rightarrow \infty.
\end{cases}
$$

The following result holds.
\begin{prop}
If $\bE^i$ is given by \eqnref{general_incident}, then the
corresponding scattering amplitude can be expanded as
\begin{align} \label{aexpansion}
\ds \mathbf{A}_{\infty}[\epsilon, \mu, \omega] (\hat{\Bx})=&\sum_{n=1}^{\infty}\frac{-i^{-n}{k_0}}{\sqrt{n(n+1)}}
 \sum_{m=-n}^n\Bigr(\alpha_{n,m} \mathbf{V}_{n, m}(\hat{\Bx})
+\beta_{n,m} \sqrt{\frac{\mu_0}{\epsilon_0}}\mathbf{U}_{n,m}(\hat{\Bx}) \Bigr),
\end{align}
where $\alpha_{n,m}$ and $\beta_{n,m}$ are defined by
\eqnref{alphabeta}.
\end{prop}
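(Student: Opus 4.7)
The plan is to prove the proposition by direct substitution: take the multipole representation \eqnref{generalexpan} of $\bE-\bE^i$ and insert, term by term, the radiation-zone asymptotics of $\bE_{n,m}^{TE}(k_0;\Bx)$ and $\bE_{n,m}^{TM}(k_0;\Bx)$ that are displayed immediately above the proposition. Both asymptotics share the common outgoing prefactor $e^{ik_0|\Bx|}/(k_0|\Bx|)$, so the identification with $\mathbf{A}_\infty[\epsilon,\mu,\omega](\hat{\Bx})$ reduces to matching the remaining scalar coefficient in front of $\mathbf{V}_{n,m}(\hat{\Bx})$ (from the TE multipole) and of $\mathbf{U}_{n,m}(\hat{\Bx})$ (from the TM multipole).

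Plugging in the asymptotics multiplies the coefficient $\frac{ik_0}{n(n+1)}$ occurring in \eqnref{generalexpan} by $-\sqrt{n(n+1)}\,e^{-i(n+1)\pi/2}$ for the TE term, and by the same quantity times $\sqrt{\mu_0/\epsilon_0}$ for the TM term. The algebraic simplification
$$
\frac{ik_0}{n(n+1)}\cdot\bigl(-\sqrt{n(n+1)}\bigr)\,e^{-i(n+1)\pi/2}
= -\frac{ik_0\,e^{-i(n+1)\pi/2}}{\sqrt{n(n+1)}},
$$
combined with the elementary identity $-i\,e^{-i(n+1)\pi/2} = -i\,(-i)^{n+1} = -(-i)^{n} = -i^{-n}$, rewrites this prefactor as $-i^{-n}k_0/\sqrt{n(n+1)}$, which is exactly the coefficient in \eqnref{aexpansion}. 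The TM piece carries the additional factor $\sqrt{\mu_0/\epsilon_0}$ in front of $\mathbf{U}_{n,m}$, again matching the statement.

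The only genuinely nontrivial ingredient is the interchange of the limit $|\Bx|\to\infty$ with the double infinite sum in \eqnref{generalexpan} and \eqnref{alphabeta}. The remainders in the asymptotic expansions of $h_n^{(1)}$ and $(h_n^{(1)})'$ depend on the index $n$, so a summable majorant is required. This is supplied by the super-exponential estimate $|W^{\cdot,\cdot}_{(n,m)(p,q)}|\le C^{n+p}/(n^n p^p)$ of Lemma \ref{Westimate}: coupled via \eqnref{alphabeta} with the comparable decay in $p$ of the multipole coefficients $a_{p,q},b_{p,q}$ (forced by convergence of \eqnref{general_incident} on a neighborhood of $\overline{D}$), it yields rapid decay of $\alpha_{n,m}$ and $\beta_{n,m}$ in $n$. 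Dominated convergence then legitimises the term-by-term passage to the far field.

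This tail-control step is the only real obstacle in what is otherwise a mechanical simplification; once it is in place, the identity \eqnref{aexpansion} follows from the arithmetic above and the definition of the scattering amplitude.
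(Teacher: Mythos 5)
Your proposal is correct and follows essentially the same route as the paper: the paper obtains \eqnref{aexpansion} precisely by inserting the displayed large-argument asymptotics of $h_n^{(1)}$ and $(h_n^{(1)})'$ (via \eqnref{curlE}) into \eqnref{generalexpan} and simplifying the prefactor, which is the arithmetic you carried out. Your additional justification of the term-by-term passage to the far field using the decay estimate of Lemma \ref{Westimate} is a point the paper leaves implicit, so it is a welcome (and correct) refinement rather than a departure.
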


We emphasize that since $ \{ \mathbf{V}_{n,m}, \mathbf{U}_{n,m} \}$ forms an orthogonal basis of $L^2_T(S)$, the
conversion of the far-field to the near field is achieved via
formula \eqnref{generalexpan}.

We now consider the case where the incident wave $\bE^i$ is given
by a plane wave $e^{i \mathbf{k}\cdot \Bx}\mathbf{c}$ with
$|\mathbf{k}|=k_0$ and $\mathbf{k}\cdot \mathbf{c}=0$. It follows from
\eqnref{expplanewave} that
$$
  e^{i \mathbf{k} \cdot \Bx}\mathbf{c} =\sum_{p=1}^{\infty} \frac{4 \pi i^p}{\sqrt{p(p+1)}} \sum_{q=-p}^{p} \left[  \bigr(\mathbf{V}_{p,q}(\hat{\mathbf{k}}) \cdot \mathbf{c} \bigr) \widetilde{\bE}_{p,q}^{TE}(k_0;\Bx)
-\frac{1}{i \omega \mu_0}
\bigr(\mathbf{U}_{p,q}(\hat{\mathbf{k}})
\cdot \mathbf{c}\bigr) \widetilde{\bE}_{p,q}^{TM}(k_0;\Bx)\right],$$ where
$\hat{\mathbf{k}} = \mathbf{k}/k_0 \in S$, and therefore,
$$a_{p,q}=  \frac{4 \pi i^p}{\sqrt{p(p+1)}}(\mathbf{V}_{p,q}(\hat{\mathbf{k}}) \cdot \mathbf{c} )\quad \mbox{and} \quad
b_{p,q} = -\frac{4 \pi i^p}{\sqrt{p(p+1)}}\frac{1}{i \omega \mu_0}
(\mathbf{U}_{p,q}(\hat{\mathbf{k}})
\cdot \mathbf{c}) .$$
Hence, the scattering amplitude, which we denote by $\mathbf{A}_{\infty}[\epsilon, \mu, \omega](\bm{c},\hat{\bm{k}};\hat{\bm{x}})$, is given by \eqnref{aexpansion} with the coefficients $\alpha_{n,m}$ and $\beta_{n,m}$
\begin{equation}\label{far_field}
\begin{cases}
\ds\alpha_{n,m} = \sum_{p=1}^{\infty} \sum_{q=-p}^p\frac{4 \pi i^p}{\sqrt{p(p+1)}} \left[ (\mathbf{V}_{p,q}(\hat{\mathbf{k}}) \cdot \mathbf{c} )W_{(n,m)(p,q)}^{TE,TE}-\frac{1}{i \omega \mu_0} (\mathbf{U}_{p,q}(\hat{\mathbf{k}}) \cdot \mathbf{c}) W_{(n,m)(p,q)}^{TE,TM}\right],\\
 \ds\beta_{n,m} = \sum_{p=1}^{\infty} \sum_{q=-p}^p \frac{4 \pi i^p}{\sqrt{p(p+1)}}\left[ (\mathbf{V}_{p,q}(\hat{\mathbf{k}}) \cdot \mathbf{c} ) W_{(n,m)(p,q)}^{TM,TE}-\frac{1}{i \omega \mu_0} (\mathbf{U}_{p,q}(\hat{\mathbf{k}})  \cdot \mathbf{c}) W_{(n,m)(p,q)}^{TM,TM}\right].
\end{cases}
\end{equation}
These formulas tell us that the scattering coefficients appear in
the expansion of the scattering amplitude.

We now investigate the low frequency behavior of the scattering coefficients. Let $\Gamma(\Bx) := - 1/(4\pi |\Bx|)$ denote the fundamental
solution corresponding to the case $k=0$, and $\mathcal{M}_D$
the associated boundary integral operator:
$$
\ds\mathcal{M}_D[\bm{\varphi}](\Bx):= \mbox{p.v.} \int_{\p D}
\bm{\nu}(\Bx) \times \Bigr( \nabla_{\Bx} \times \bigr(
\Gamma(\Bx-\By) \bm{\varphi} (\By)\bigr)  \Bigr) d \sigma(\By),
\quad \bm{\varphi} \in TH({\rm div}, \p D).
$$
Analogously to \eqnref{phi_psi}, one can prove that there is a
unique solution  $(\bm{\varphi}^{(0)}, \bm{\psi}^{(0)}) \in
TH({\rm div}, \p D) \times TH({\rm div}, \p D)$ to the following
equations:
\beq \label{phi_psi0}
\begin{bmatrix}
\ds (\mu_1  -\mu_0) \bigg( \frac{\mu_1+\mu_0}{2(\mu_1  -\mu_0)}I +
\mathcal{M}_D \bigg)&
0 \\
0 & \ds (\epsilon_1  -\epsilon_0) \bigg(
\frac{\epsilon_1+\epsilon_0}{2(\epsilon_1  -\epsilon_0)}I +\mathcal{M}_D \bigg)
\end{bmatrix}
\begin{bmatrix}
\bm{\varphi}^{(0)} \\ \omega\bm{\psi}^{(0)}
\end{bmatrix}
= \left.\begin{bmatrix}
\bE^i \times \bm{\nu}\\
i  \bH^i \times \bm{\nu}
\end{bmatrix}\right|_{\p D}.
\eeq
In fact, since $\p D$ is $\mathcal{C}^{1, \Ga}$, $\mathcal{M}_D$ is compact and we may apply the Fredholm
alternative to prove unique solvability of above equation. Moreover, we have
\beq
\ds\| \bm{\varphi}^{(0)} \|_{TH({\rm
div},\p D)}+ \omega\| \bm{\psi}^{(0)} \|_{TH({\rm div}, \p D)} \leq C(\| \bE^i\times\bm{\nu} \|_{TH({\rm div}, \p D)}+ \|\bH^i\times\bm{\nu}
\|_{TH({\rm div}, \p D)}), \eeq
with a constant $C=C(\epsilon,\mu)$.

Let $\rho$ be a small positive number and consider the boundary integral equation \eqnref{phi_psi} with $k$, $k_0$, and $\omega$ replaced by $\rho k$, $\rho k_0$, and $\rho\omega$, respectively.  Then, we have (see \cite{griesmaier})
$${\mathcal{M}}^{\rho k}_D - {\mathcal{M}}_D = O(\rho^2),\quad {\mathcal{M}}^{\rho k_0}_D - {\mathcal{M}}_D = O(\rho^2),$$
and
$${\mathcal{L}}^{\rho k}_D - {\mathcal{L}}_D^{\rho k_0} = O(\rho^2).$$
Since
$$
\left( \frac{k^2}{2
\mu_1} + \frac{k_0^2}{2 \mu_0}\right)I +
\frac{k^2}{\mu_1}\mathcal{M}_D^k -
\frac{k_0^2}{\mu_0}\mathcal{M}_D^{k_0} = \rho^2 \omega^2 \bigg[
\frac{\epsilon_1+\epsilon_0}{2}I + (\epsilon_1  -\epsilon_0) \mathcal{M}_D + O(\rho^2) \bigg],
$$
if we express the solution $(\bm{\varphi},\bm{\psi})$ to \eqnref{phi_psi} as $(\bm{\varphi},\bm{\psi}):=(\bm{\varphi}^\rho, \rho \omega \bm{\psi}^\rho)$, then it satisfies
$$\Bigr(A+O(\rho)\Bigr)
\begin{bmatrix}
\bm{\varphi}^\rho \\ {\rho\omega}\bm{\psi}^\rho
\end{bmatrix}
=\left.\begin{bmatrix}
\bE^i \times \bm{\nu}\\
i  \bH^i \times \bm{\nu}
\end{bmatrix}\right|_{\p D},$$
where $A$ is the 2-by-2 matrix appeared in the left-hand side of \eqnref{phi_psi0}. From the invertibility of $A$,
it follows that there are constants
$\rho_0$ and $C=C(\epsilon, \mu, \omega)$ independent of $\rho$ as
long as $ \rho \leq \rho_0$ such that \beq
\label{phi_psi_Ein_Hin_rho} \ds\| \bm{\varphi}^{\rho} \|_{TH({\rm
div},\p D)}+ \rho\omega\| \bm{\psi}^{\rho} \|_{TH({\rm div}, \p D)}
\leq C\bigr(\| \bE^i\times\bm{\nu} \|_{TH({\rm div}, \p D)}+ \|\bH^i\times\bm{\nu}
\|_{TH({\rm div}, \p D)}\bigr). \eeq

\begin{lemma}\label{Westimate2}
There exists $\rho_0$ such that, for all $\rho \leq \rho_0$,
\begin{align}\label{W_decay_rho}
\ds&\left|W_{(n,m)(p,q)}^{TE,TE} [\epsilon, \mu, \rho\omega] \right| \leq \frac{C^{n+p}}{n^n p^p} \rho^{n+p+1},%
\end{align}
for all $n,m,p,q \in \mathbb{N}$, where the constant $C$ depends
on $(\epsilon, \mu, \omega)$ but is independent of $\rho$. The same estimate holds for $W_{(n,m)(p,q)}^{TE,TM}$, $W_{(n,m)(p,q)}^{TM,TE}$, and $W_{(n,m)(p,q)}^{TM,TM}$.
\end{lemma}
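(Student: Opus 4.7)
The plan is to imitate the proof of Lemma \ref{Westimate} essentially line by line, this time retaining the explicit dependence on $\rho$ throughout. The two new inputs are the uniform-in-$\rho$ estimate \eqnref{phi_psi_Ein_Hin_rho} (in place of \eqnref{phi_psi_Ein_Hin}) and the small-argument form of the Bessel asymptotic \eqnref{bessel_j_large_n}, namely $j_p(\rho k_0|\By|)=O((\rho k_0|\By|)^p/(2p+1)!!)$, with $(2p+1)!!\sim p^p$ by Stirling, now carrying the explicit prefactor $\rho^p$.

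Concretely, I would begin by bounding the $TH({\rm div},\p D)$-norms of the incident fields $\bE^i=\widetilde{\bE}_{p,q}^{TE}(\rho k_0;\cdot)$ and $\bH^i=\widetilde{\bH}_{p,q}^{TE}(\rho k_0;\cdot)$ restricted to $\p D$; the first is of size $(C\rho)^p/p^p$ and the second the same except for an additional $(\rho\omega\mu_0)^{-1}$ factor coming from its definition in \eqnref{multipoles_int}. Substituting into \eqnref{phi_psi_Ein_Hin_rho} controls the density pair $(\bm{\varphi}_{p,q}^{TE},\bm{\psi}_{p,q}^{TE})$. The same Bessel asymptotic applied to the detector multipoles bounds $\|\widetilde{\bE}_{n,m}^{TE}(\rho k_0;\cdot)\|$ and $\|\widetilde{\bE}_{n,m}^{TM}(\rho k_0;\cdot)\|$ on $\p D$ by $(C\rho)^n/n^n$, up to an additional $(\rho\omega\epsilon_0)^{-1}$ in the $TM$-case. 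Plugging these bounds and Cauchy--Schwarz into each of the two surface integrals in Definition \ref{defW}, the explicit prefactors $-i\rho\omega\epsilon_0\mu_0$ and $(\rho k_0)^2$ cancel the hidden $(\rho\omega)^{-1}$ factors, yielding $|W_{(n,m)(p,q)}^{TE,TE}[\epsilon,\mu,\rho\omega]|\leq C^{n+p}\rho^{n+p+1}/(n^n p^p)$. The bounds for $W^{TE,TM}, W^{TM,TE}, W^{TM,TM}$ follow from exactly the same calculation upon interchanging the incident and detector multipoles according to Definition \ref{defW}.

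The hard part is the meticulous $\rho$-bookkeeping: every magnetic-type multipole or density carries a latent $(\rho\omega)^{-1}$ factor, and any lazy application of \eqnref{phi_psi_Ein_Hin_rho} or of the Bessel asymptotic drops the overall exponent by one or two units. If the direct count only delivers $\rho^{n+p-1}$, then the remaining two powers must be recovered either from a sharper Schur-complement analysis of the rescaled system—showing that $\bm{\varphi}_{p,q}^{TE}$ inherits the full $\rho^p$ decay of the tangential electric data rather than only the $\rho^{p-1}$ decay of the magnetic data—or from the structural fact that, at leading order in $\rho$, the interior multipole $\widetilde{\bE}_{n,m}^{TM}(\rho k_0;\cdot)$ reduces to the gradient of a harmonic polynomial and therefore pairs with the tangential density $\bm{\varphi}_{p,q}^{TE}$ only through its surface divergence, supplying the missing gain by integration by parts on $\p D$.
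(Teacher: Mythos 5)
Your strategy is the same as the paper's: the printed proof of Lemma \ref{Westimate2} is exactly the proof of Lemma \ref{Westimate} with $\rho$ tracked --- bound the incident pair $\bE^i=\widetilde{\bE}_{p,q}^{TE}(\rho k_0;\cdot)$, $\bH^i=-\frac{i}{\rho\omega\mu_0}\nabla\times\bE^i$ via \eqnref{bessel_j_large_n}, feed this into \eqnref{phi_psi_Ein_Hin_rho} to control $(\bm{\varphi}^{\rho},\bm{\psi}^{\rho})$, and conclude from Definition \ref{defW}. So on the level of route you and the authors coincide.

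However, the worry you raise in your last paragraph is a genuine one, and it is not resolved by the paper either: the paper's proof \emph{is} the ``lazy count'' you warn against. With the stated definitions, $\bH^i$ carries the factor $(\rho\omega\mu_0)^{-1}$, so its trace is $O(\rho^{p-1})$ rather than the $O(\rho^{p})$ asserted in the paper; likewise the detector $\widetilde{\bE}_{n,m}^{TM}(\rho k_0;\cdot)$ in the first integral of Definition \ref{defW} is $O(\rho^{n-1})$ because of its factor $(\rho\omega\epsilon_0)^{-1}$. A literal application of \eqnref{phi_psi_Ein_Hin_rho} then yields $\rho^{n+p-1}$, and even granting the paper's density bound $\|\bm{\varphi}^{\rho}\|+\rho\|\bm{\psi}^{\rho}\|\lesssim C^{p}\rho^{p}/p^{p}$ (which itself already needs the block structure of the limiting system \eqnref{phi_psi0}, since the $\bH^i$-datum alone would only give $\rho^{p-1}$), the first integral comes out as $O(\rho^{n+p})$ --- one power short of \eqnref{W_decay_rho}, whose sharpness is confirmed by the exact radial computation $W_n^{TE}[\mu,\epsilon,t]\sim t^{2n+1}$. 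Your two candidate repairs point in the right direction but are only named, not executed, and as stated they still do not close the last power: the Schur-complement refinement gives $\|\bm{\varphi}\|=O(\rho^{p})$ but leaves both integrals at $O(\rho^{n+p})$, and the integration-by-parts mechanism only helps if one additionally shows that $\nabla_{\p D}\cdot\bm{\varphi}$ is one power of $\rho$ smaller than $\bm{\varphi}$ itself (e.g.\ through the identity relating the surface divergence of a rotated tangential trace to the normal component of a curl, which produces a factor $\rho\omega$ via the Maxwell system); the crude bound $\|\nabla_{\p D}\cdot\bm{\varphi}\|\lesssim\|\bm{\varphi}\|_{TH({\rm div},\p D)}$ gains nothing. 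So: same approach as the paper, and essentially the same unproved step at the end --- the difference being that you flag it explicitly, while the paper's two-line proof passes over it in silence.
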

\begin{proof}
Let $(\bm{\varphi},\bm{\psi})$ be the solution to \eqnref{phi_psi}
with $\bE^i(\By) = \widetilde{\bE}_{p,q}^{TE}(\rho k_0;\By)$ and $\bH^i =
-\frac{i}{\rho \omega \mu_0}\nabla \times \bE^i$.
Then, from \eqnref{bessel_j_large_n}, it follows that
$$
\big\| \bE^{i, \rho} \big\|_{TH({\rm div},\p D)}+ \big\| \bH^{i,
\rho} \big\|_{TH({\rm div},\p D)} \leq \frac{C^p}{p^p} \rho^p,
$$
where $C$ is independent of $\rho$, and hence
$$
\big\| \bm{\varphi}^{\rho}\big\|_{L^2(\p D)} + \rho \big\|
\bm{\psi}^{\rho} \big\|_{L^2(\p D)} \leq \frac{C^p}{p^p} \rho^{p},
$$
for $\rho \leq \rho_0$ for some $\rho_0$.
So we get \eqnref{W_decay_rho} from the definition of the scattering coefficients in Definition \ref{defW}.
\end{proof}

\section{S-vanishing structures}\label{mlstructure}

The purpose of this section is to construct multilayered structures whose scattering coefficients vanish,
which we call {\it
S-vanishing structures}.  The multi-layered structure is defined as follows: For positive numbers
$r_1,\dotsc, r_{L+1} $ with $2=r_1 > r_2 > \dotsb r_{L+1} = 1$, let
$$
A_j := \{ \Bx : r_{j+1} \leq |\Bx| < r_j \}, \quad j=1, \dotsc, L, \quad A_0 :=\mathbb{R}^2 \setminus \overline{A_1},
\quad
A_{L+1}(=D) := \{ \Bx : |\Bx| < 1 \},
$$
and
$$
\GG_j= \{ |\Bx|=r_j \}, \quad j=1, \ldots, L+1.
$$
Let $( \mu_j, \epsilon_j )$ be the pair of permeability and
permittivity parameters of $A_j$ for $j=1, \dotsc, L+1$. Set $
\mu_0 = 1$ and $ \epsilon_0 = 1$. We then define
\beq
\mu=\sum_{j=0}^{L+1} \mu_j \chi (A_j) \quad \mbox{and} \quad
\epsilon = \sum_{j=0}^{L+1} \epsilon_j \chi (A_j),
\eeq
which are permeability and permittivity distributions of the layered structure.

The scattering coefficients $\Bigr(
W_{(n,m)(p,q)}^{TE,TE},W_{(n,m)(p,q)}^{TE,TM},W_{(n,m)(p,q)}^{TM,TE},W_{(n,m)(p,q)}^{TM,TM}\Bigr)$
are defined as before, namely, if $\bE^i$ given as in
\eqnref{general_incident}, the scattered field $\bE - \bE^i$ can be expanded as \eqnref{generalexpan}
and \eqnref{alphabeta}. The transmission condition on each interface $\GG_j$ is given by
\beq\label{transcond2}
[\hat{\Bx}\times{\bE}]=[\hat{\Bx}\times{\bH}]=0.
\eeq
We assume that the core $A_{L+1}$ is perfectly conducting (PEC), namely,
\beq
\bE \times \bm{\nu} =0 \quad\mbox{on } \GG_{L+1}=\p A_{L+1}.
\eeq

Thanks to the symmetry of the layered (radial) structure, the scattering coefficients are much simpler than the general case. In fact, if the
incident field is given by $\bE^i =\widetilde{\bE}_{n,m}^{TE}$, then the solution $\bE$ to \eqnref{eqn_scatt_field} takes the form
\beq \label{E_n_TE}
\bE(\Bx)=\tilde{a}_j \widetilde{\bE}_{n,m}^{TE}(\Bx) + a_j
\bE_{n,m}^{TE}(\Bx),\quad \Bx \in A_j, \quad j=0,\ldots ,L, \eeq
with $\tilde{a}_0=1$. From \eqnref{curlE} and \eqnref{curlE2}, the interface condition \eqnref{transcond2} amounts to
\begin{align} \label{recurrence_TE}
&\begin{bmatrix}
\ds j_n(k_{j}r_j)& \ds h_n^{(1)}(k_{j}r_j)\\
\ds \frac{1}{\mu_{j}}  \mathcal{J}_n (k_{j} r_j) &
 \ds\frac{1} {\mu_{j}}\mathcal{H}_n (k_{j} r_j)
\end{bmatrix}
\begin{bmatrix}
\ds \tilde{a}_j \\ \ds a_j
\end{bmatrix} \nonumber\\
&=\
\begin{bmatrix}  \ds j_n(k_{j-1}r_j)&\ds h_n^{(1)}(k_{j-1}r_j)\\
\ds\frac{1}{\mu_{j-1}}  \mathcal{J}_n (k_{j-1} r_j) &
\ds\frac{1} {\mu_{j-1}}\mathcal{H}_n (k_{j-1} r_j)\end{bmatrix}
\begin{bmatrix}
\ds\tilde{a}_{j-1} \\ \ds a_{j-1}
\end{bmatrix}, \quad j=1, \ldots, L,
\end{align}
where $\mathcal{H}_n(t)=h_n^{(1)}(t)+t \left( h_n^{(1)} \right)' (t)$ and $\mathcal{J}_n(t)=j_n(t)+t  j_n ' (t)$,
and the PEC boundary condition on $\GG_{L+1}$ is
\beq \label{PEC_bc_TE}
\begin{bmatrix}
j_n(k_{L})&h_n^{(1)}(k_{L})\\
0&0
\end{bmatrix}
\begin{bmatrix}
\tilde{a}_L \\ a_L
\end{bmatrix}
=
\begin{bmatrix}
0 \\0
\end{bmatrix}.
\eeq
Since the matrices appeared in \eqnref{recurrence_TE} are invertible, one can see that there are $a_j$ and $\tilde{a}_j$, $j=0, 1, \ldots L$ satisfying \eqnref{recurrence_TE} and \eqnref{PEC_bc_TE}. Similarly, one can see that if the incident field is given by $\bE^i= \widetilde{\bE}_{n,m}^{TM}(\Bx)$, then the solution  $\bE$ takes the form
\begin{equation}\label{E_n_TM}
\ds \bE(\Bx)=\tilde{b}_j \widetilde{\bE}_{n,m}^{TM}(\Bx) + b_j \bE_{n,m}^{TM}(\Bx),\quad \Bx \in A_j, \quad j=0, 1,. . . ,L
\end{equation}
for some constants $b_j$ and $\tilde{b}_j$ ($\tilde{b}_0=1$). One can see now from \eqnref{E_n_TE} and \eqnref{E_n_TM} that
the scattering coefficients satisfy
\begin{align*}
\ds W_{(n,m)(p,q)}^{TE,TM} &= W_{(n,m)(p,q)}^{TM,TE} = 0 \quad \mbox{for all } (m,n) \mbox{ and } (p,q),  \\
\ds W_{(n,m)(p,q)}^{TE,TE} &= W_{(n,m)(p,q)}^{TM,TM} = 0 \quad \mbox{if } (m,n) \neq (p,q),
\end{align*}
and, since \eqnref{E_n_TE} and \eqnref{E_n_TM} hold independently of $m$, we have
\begin{align*}
\ds W_{(n,0)(n,0)}^{TE,TE} &= W_{(n,m)(n,m)}^{TE,TE} ,  \\
\ds W_{(n,0)(n,0)}^{TM,TM} &= W_{(n,m)(n,m)}^{TM,TM} \quad\quad\  \mbox{for } -n \leq m \leq n.
\end{align*}
Moreover, if we write
$$
W_n^{TE} := W_{(n,0)(n,0)}^{TE} \quad \mbox{and} \quad W_n^{TM} := W_{(n,0)(n,0)}^{TM},
$$
then we have
\begin{equation}\label{Wnteall}
W_n^{TE}=-\frac{in(n+1)}{k_0}a_0 \quad\mbox{and}\quad W_n^{TE}=-\frac{in(n+1)}{k_0}b_0.
\end{equation}

Suppose now that $\widetilde{\bE}_{n,0}^{TE}$ is the incident field and the solution $\bE$ is given by
$$
\bE(\Bx)=\tilde{a}_j \widetilde{\bE}_{n,0}^{TE}(\Bx) + a_j
\bE_{n,0}^{TE}(\Bx),\quad \Bx \in A_j, \quad j=0,\ldots ,L,
$$
with $\tilde{a}_0=1$, where
the coefficients $\tilde{a}_j$'s and $a_j$'s are determined by \eqnref{recurrence_TE} and \eqnref{PEC_bc_TE}.
We have from \eqnref{recurrence_TE} that
$$
\begin{bmatrix}
\ds \tilde{a}_j \\ \ds a_j
\end{bmatrix} = \begin{bmatrix}
\ds j_n(k_{j}r_j)& \ds h_n^{(1)}(k_{j}r_j)\\
\ds \frac{1}{\mu_{j}}  \mathcal{J}_n (k_{j} r_j) &
 \ds\frac{1} {\mu_{j}}\mathcal{H}_n (k_{j} r_j)
\end{bmatrix}^{-1}
\begin{bmatrix}  \ds j_n(k_{j-1}r_j)&\ds h_n^{(1)}(k_{j-1}r_j)\\
\ds\frac{1}{\mu_{j-1}}  \mathcal{J}_n (k_{j-1} r_j) &
\ds\frac{1} {\mu_{j-1}}\mathcal{H}_n (k_{j-1} r_j)\end{bmatrix}
\begin{bmatrix}
\ds\tilde{a}_{j-1} \\ \ds a_{j-1}
\end{bmatrix},
$$
for $j=1, \ldots, L$. Substituting these relations into \eqnref{PEC_bc_TE} yields
\beq \label{00_P_TE}
\begin{bmatrix}
0 \\ 0
\end{bmatrix}
=
P_{n}^{TE}[\ep,\mu,\omega]
\begin{bmatrix}
\tilde{a}_0\\ a_0
\end{bmatrix},
\eeq
where
\begin{align}\nonumber
&P_{n}^{TE}[\ep,\mu,\omega] :=
\begin{bmatrix}
p_{n,1}^{TE} & p_{n,2}^{TE} \\ 0 & 0
\end{bmatrix}
= (-i \omega)^L \left( \prod_{j=1}^{L} \mu_j^{\frac{3}{2}} \ep_j^{\frac{1}{2}} r_j \right)
\begin{bmatrix}
j_n(k_{L})&h_n^{(1)}(k_{L})\\
0&0
\end{bmatrix}
\\\label{eqn:PTE}
&\qquad\qquad\times \prod_{j=1}^{L}
\begin{bmatrix}
\ds\frac{1} {\mu_{j}}\mathcal{H}_n (k_{j} r_j) & \ds -h_n^{(1)}(k_{j}r_j) \\
\ds-\frac{1}{\mu_{j}}  \mathcal{J}_n (k_{j} r_j) & \ds j_n(k_{j}r_j)
\end{bmatrix}
\begin{bmatrix}\ds j_n(k_{j-1}r_j) & \ds h_n^{(1)}(k_{j-1}r_j)\\\ds\frac{1}{\mu_{j-1}}  \mathcal{J}_n (k_{j-1} r_j)
&\ds\frac{1} {\mu_{j-1}}\mathcal{H}_n (k_{j-1} r_j)\end{bmatrix}.
\end{align}
We then have from \eqnref{00_P_TE}
\begin{equation}\label{Wnte}
W_n^{TE}=-\frac{in(n+1)}{k_0}a_0 =-\frac{in(n+1)}{k_0}\frac{p^{TE}_{n,1}}{p^{TE}_{n,2}}.
\end{equation}

\smallskip

 Similarly, for $W_n^{TM}$, we look for another solution  $\bE$ of the form
\begin{equation*}
\ds \bE(\Bx)=\tilde{b}_j \widetilde{\bE}_{n,0}^{TM}(\Bx) + b_j \bE_{n,0}^{TM}(\Bx),\quad \Bx \in A_j, \quad j=0,. . . ,L,
\end{equation*}
with $\tilde{b}_0=1$. The transmission conditions become
\begin{align} \label{recurrence_TM}
&\begin{bmatrix}
\ds\frac{1}{\ep_{j}} \mathcal{J}_n (k_{j} r_j)&
\ds\frac{1} {\ep_{j}}  \mathcal{H}_n (k_{j} r_j)\\
\ds j_n(k_{j}r_j)& \ds h_n^{(1)}(k_{j}r_j)\end{bmatrix}
\begin{bmatrix}
\tilde{b}_j \\ b_j
\end{bmatrix} \nonumber \\
& =\
\begin{bmatrix}
\ds\frac{1}{\ep_{j-1}} \mathcal{J}_n (k_{j-1} r_j)&
\ds\frac{1} {\ep_{j-1}} \mathcal{H}_n (k_{j-1} r_j)\\\
\ds j_n(k_{j-1}r_j)& \ds h_n^{(1)}(k_{j-1}r_j)
\end{bmatrix}
\begin{bmatrix}
\tilde{b}_{j-1} \\ b_{j-1}
\end{bmatrix},\quad j=1, \ldots, N+1,
\end{align}
and the PEC boundary condition on the inner most layer is
\beq \label{PEC_bc_TM}
\begin{bmatrix}
 \mathcal{J}_n (k_{L} )&
 \mathcal{H}_n (k_{L} ) \\
0&0
\end{bmatrix}
\begin{bmatrix}
\tilde{b}_L\\ b_L
\end{bmatrix}
=
\begin{bmatrix}
0 \\0
\end{bmatrix}.
\eeq
From \eqnref{recurrence_TM} and \eqnref{PEC_bc_TM}, we obtain
\beq \label{00_P_TM}
\begin{bmatrix}
0 \\ 0
\end{bmatrix}
=
P_{n}^{TM}[\ep,\mu,\omega]
\begin{bmatrix}
\tilde{b}_0 \\ b_0
\end{bmatrix},
\eeq
where
\begin{align}
&\ds P_{n}^{TM}[\ep,\mu,\omega] :=
\begin{bmatrix}
\ds p_{n,1}^{TM} & \ds p_{n,2}^{TM} \\ 0 & 0
\end{bmatrix}
\ds= (i \omega)^L \left( \prod_{j=1}^{L} \mu_j^{\frac{1}{2}} \ep_j^{\frac{3}{2}} r_j \right)
\begin{bmatrix}
\ds\mathcal{J}_n(k_L) & \ds\mathcal{H}_n (k_{L})\\
0&0
\end{bmatrix} \nonumber
\\\label{eqn:PTM}
&\ds\qquad\qquad\times \prod_{j=1}^{L}
\begin{bmatrix}
\ds h_n^{(1)} (k_j r_j)  & \ds -\frac{1}{\epsilon_j} \mathcal{H}_n (k_j r_j) \\
\ds -j_n(k_j r_j) &\ds \frac{1}{\epsilon_j} \mathcal{J}_n (k_j r_j)
\end{bmatrix}
\begin{bmatrix}
\ds\frac{1}{\epsilon_{j-1}} \mathcal{J}_n(k_{j-1}r_j) & \ds\frac{1}{\epsilon_{j-1}} \mathcal{H}_n (k_{j-1} r_j)\\
\ds j_n (k_{j-1} r_j)  & \ds h_n^{(1)} (k_{j-1} r_j)
\end{bmatrix} .
\end{align}
From the definition of $W_n^{TM}$ and \eqnref{00_P_TM}, we have
\begin{equation}\label{Wntm}W_n^{TE}=-\frac{in(n+1)}{k_0}\frac{b_0}{\tilde{b}_0}=-\frac{in(n+1)}{k_0}\frac{p^{TM}_{n,1}}{p^{TM}_{n,2}}.
\end{equation}

It should be emphasized that
$p_{n,2}^{TE} \neq 0$ and $p_{n,2}^{TM} \neq 0$. In fact, if
$p_{n,2}^{TE} = 0$, then \eqnref{00_P_TE} can be fulfilled with
$\tilde{a}_0=0$ and $a_0=1$. This means that there exists
$(\mu,\epsilon)$ on $\RR^3 \setminus \overline{D}$ such that the
following problem has a solution:
\begin{equation*}
 \ \left \{
 \begin{array}{ll}
\ds\nabla\times{\bE} = i\omega\mu{\bH} \quad &\mbox{in } \RR^3 \setminus \overline{D},\\
\ds\nabla\times{\bH} = -i\omega\epsilon{\bE} \quad &\mbox{in } \RR^3 \setminus \overline{D},\\
\ds(\Bx \times \bE)\bigr|_+ = 0 \quad &\mbox{on } \p D,\\
\ds \bE(\Bx) = \bE_{n,0}^{TE}(\Bx) \quad &\mbox{for } |\Bx| > 2.
 \end{array}
 \right .
\end{equation*}
Applying the following Green's theorem on $\Omega=\{\Bx\ \bigr| \ 1<|\Bx|<R\}$,
\begin{align*}
&\int_\Omega\bigr(\bE\cdot\Delta \mathbf{F}+\mbox{curl} \bE\cdot \mbox{curl} \mathbf{F}+\mbox{div} \bE\ \mbox{div} \mathbf{F}\bigr)d\Bx\\
&=\int_{\p \Omega}\bigr(\nu\times \bE\cdot\mbox{curl}\mathbf{F}+\nu\cdot\bE\ \mbox{div} \mathbf{F}\bigr)d\sigma(\Bx)
\end{align*}
with $\mathbf{F}=\overline{\bE_{n,0}^{TE}}(\Bx)$ and the PEC boundary condition on $\{\Bx|=1\}$,
we have
$$\int_{|\Bx|=R}(\nu\times\bE)\cdot\overline{\bH}d\sigma(\Bx)=ik_0\int_{\Omega}(|\bH|^2-|\bE|^2)d\Bx.$$
In particular, the left-hand side is real-valued. Hence,
\begin{align*}
\int_{|\Bx|=R}|\bH\times\nu-\bE|^2d\sigma(\Bx)
=&\int_{|\Bx|=R}\bigr(|\bH\times\nu|^2+|\bE|^2-2\Re((\nu\times\bE)\cdot\overline{\bH}\bigr)d\sigma(\Bx)\\
=&\int_{|\Bx|=R}\bigr(|\bH\times\nu|^2+|\bE|^2\bigr)d\sigma(\Bx).
\end{align*}
From the radiation condition, the left-hand side goes to zero as $R\rightarrow\infty$, and it contradicts the behavior of the hankel functions.
One can show that $p_{n,2}^{TM} \neq 0$ in a similar way.
\qed


To construct the S-vanishing structure at a fixed frequency $\omega$ we look for $(\mu,\epsilon)$ such that
$$
W_n^{TE} [\ep,\mu,\omega]=0, \ W_n^{TM} [\ep,\mu,\omega]=0, \quad n=1, \ldots, N
$$
for some $N$. More ambitiously we may look for a structure
$(\mu,\epsilon)$ for a fixed $\omega$ such that
$$ W^{TE}_n[\mu, \epsilon, \rho \omega] = 0,\quad  W^{TM}_n[\mu, \epsilon, \rho \omega] = 0 \quad
$$ for all $1 \leq n \leq N$ and $\rho \leq \rho_0$ for some $\rho_0$. Such a structure may not exist. So
instead we look for a structure such that
\beq\label{Sstructure}
W_n^{TE}[\mu, \epsilon, \rho \omega] = o(\rho^{2N+1}), \quad W_n^{TM}[\mu, \epsilon, \rho \omega] = o(\rho^{2N+1}),
\eeq
for all $1 \leq n \leq N $ and $\rho \leq \rho_0$ for some $\rho_0$. We call such a structure
a \textit{S-vanishing structure of order $N$ at low frequencies}.
In the following section, we expand the scattering coefficients for low frequencies and derive conditions for the magnetic permeability and the electric permittivity to be a S-vanishing structure.

Suppose that $(\mu, \epsilon) $ is an S-vanishing structure of order $N$ at low frequencies. Let the incident wave $\bE^i$ be given by a plane wave $e^{i \rho\mathbf{k}\cdot \Bx}\mathbf{c}$ with $|\mathbf{k}|=k_0$ and $\mathbf{k}\cdot \mathbf{c}=0$.
From \eqnref{far_field}, the corresponding scattering amplitude, $\mathbf{A}_{\infty}[\mu, \epsilon, \rho\omega](\bm{c},\hat{\bm{k}};\hat{\bm{x}})$, is given by \eqnref{aexpansion} with the following $\alpha_{n,m}$ and $\beta_{n,m}$.
\begin{equation*}
\begin{cases}
\ds\alpha_{n,m} = \frac{4 \pi i^n}{\sqrt{n(n+1)}}(\mathbf{V}_{n,m}(\hat{\mathbf{k}}) \cdot \mathbf{c} )W_{n}^{TE}[\mu, \epsilon, \rho\omega],\\
 \ds\beta_{n,m} =  -\frac{4 \pi i^n}{\sqrt{n(n+1)}}\frac{1}{i \omega \mu_0} (\mathbf{U}_{n,m}(\hat{\mathbf{k}}) \cdot \mathbf{c}) W_{n}^{TM}[\mu, \epsilon, \rho\omega].
\end{cases}
\end{equation*}
Applying \eqnref{W_decay_rho} and \eqnref{Sstructure}, we have
\beq \label{farfield1}
\ds \bm{A}_\infty[ \mu, \epsilon, \rho \omega] (\mathbf{c},\hat{\mathbf{k}};\hat{\Bx}) = o ( \rho^{2N+1} )
\eeq
uniformly in $(\hat{\mathbf{k}},\hat{\Bx})$ if $ \rho \leq \rho_0 $. Thus using such a structure the visibility of scattering amplitude is greatly reduced.

\subsection{Asymptotic expansion of the scattering coefficients}

The spherical Bessel functions of the first and second kinds have
the series expansions:
 $$j_n(t)=\sum_{l=0}^\infty \frac{(-1)^l t^{n+2l}}{2^l l!1\cdot 3\cdots(2n+2l+1)},$$
 and
 $$y_n(t) = -\frac{(2n)!}{2^n n!}\sum_{l=0}^\infty\frac{(-1)^l t^{2l-n-1}}{2^l l!(-2n+1)(-2n+3)\cdots(-2n+2l-1)}.$$
So, using the notation of double factorials, which is defined by
$$
n!!:= \left\{
\begin{array}{ll}
n \cdot (n-2) \ldots 3 \cdot 1 \quad & \mbox{if $n>0$ is odd}, \\
n \cdot (n-2) \ldots 4 \cdot 2 \quad & \mbox{if $n>0$ is even}, \\
1 \quad & \mbox{if $n=-1, 0$},
\end{array}
\right.
$$
we have \beq \label{jn_expansion}
j_n(t)=\frac{t^n}{(2n+1)!!}\bigr(1+o(t)\bigr)\quad\mbox{for
}t\ll1,\eeq and \beq \label{yn_expansion}
y_n(t)=-\bigr((2n-1)!!\bigr)t^{-n+1}\bigr(1+o(t)\bigr)\quad\mbox{for
}t\ll1.\eeq

We now compute $P_n^{TE}[\epsilon, \mu, t]$ for small $t$.
For $n\geq 1$,
 \begin{align*} P_{n}^{TE}[\ep,\mu,t]
&= (-i t)^L \left( \prod_{j=1}^{L} \mu_j^{\frac{3}{2}}
\ep_j^{\frac{1}{2}} r_j \right)
\begin{bmatrix}
\ds\frac{z_L^n}{(2n+1)!!}t^n+o(t^n)&\ds\frac{-iQ(n) }{z_L^{n+1}}t^{-n-1} \\
0&0
\end{bmatrix} \nonumber\\
&\times\prod_{j=1}^{L}\left(
\begin{bmatrix}
\ds \frac{i Q(n) n }{\mu_j (z_j r_j)^{n+1}} t^{-n-1} +o(t^{-n-1}) &\ds \frac{i Q(n)}{(z_j r_j)^{n+1}}t^{-n-1} +o(t^{-n-1}) \\
\nm
\ds\frac{-(n+1)(z_j r_j)^n}{\mu_j (2n+1)!!} t^{n} +o(t^{n})& \ds \frac{(z_j r_j)^n}{(2n+1)!!}t^n +o(t^n)
\end{bmatrix} \right.\nonumber\\
&
\qquad\quad\left.\begin{bmatrix} \ds\frac{(z_{j-1}r_j)^n}{(2n+1)!!}t^n +o(t^n)& \ds\frac{-i Q(n)}{(z_{j-1}r_j)^{n+1}}t^{-n-1}+o(t^{-n-1})\\ \nm
\ds\frac{(n+1)(z_{j-1} r_j)^n}{\mu_{j-1}(2n+1)!!}t^n +o(t^n) & \ds
\frac{i Q(n) n }{\mu_{j-1} (z_{j-1} r_j)^{n+1}}t^{-n-1} +o(t^{-n-1})
\end{bmatrix} \right),\nonumber
\end{align*}
where $z_j=\sqrt{ \ep_j \mu_j }$ and $Q(n)=(2n-1)!!$.
We then have
{\small \small
\begin{align*}
&P_{n}^{TE}[\ep,\mu,t] =\begin{bmatrix}
\ds\frac{z_L^n}{(2n+1)!!}t^n+o(t^n)&\ds\frac{-iQ(n) }{z_L^{n+1}}t^{-n-1} +o(t^{-n-1})\\
0&0
\end{bmatrix}\times\\
&
\prod_{j=1}^{L} \nonumber
\begin{bmatrix}
\ds \frac{Q(n) z_{j-1}^n }{(2n+1)!!z_j^n}\bigr( n + \frac{(n+1)\mu_j}{\mu_{j-1}}\bigr)\bigr(1 +o(1) \bigr)&
\ds (-i)\frac{(Q(n))^2 n }{z_j^n z_{j-1}^{n+1} r_j^{2n+1}} \bigr( 1-\frac{\mu_j}{\mu_{j-1}}\bigr){t^{-2n-1}}{\bigr(1 +o(1) \bigr)}  \\
\nm
\ds i\frac{z_{j-1}^n z_j^{n+1} r_j^{2n+1}(n+1) }{((2n+1)!!)^2 }\bigr(1-\frac{\mu_j}{\mu_{j-1}}\bigr)t^{2n+1} \bigr(1 +o(1) \bigr) & \ds \frac{Q(n) z_j^{n+1}}{(2n+1)!! z_{j-1}^{n+1}}\bigr( n+1+\frac{n\mu_j}{\mu_{j-1}} \bigr) \bigr(1 +o(1) \bigr)
\end{bmatrix}.
\end{align*}}

Similarly, for the transverse magnetic case, we have {\small \small \begin{align*}
&P_{n}^{TM}[\ep,\mu,t] =
\begin{bmatrix}
\ds \frac{(n+1)z_L^n}{(2n+1)!!}t^n +o(t^n)&\ds \frac{-i n Q(n)}{z_L^{n+1}} t^{-n-1} +o(t^{-n-1})\\
\ds0&\ds0
\end{bmatrix} \times \nonumber \\
&\prod_{j=1}^{L}
\begin{bmatrix}
\ds \frac{Q(n) z_{j-1}^n }{(2n+1)!!z_j^n}\left( (n + \frac{\ep_j}{\ep_{j-1}}(n+1)\right)\bigr(1 +o(1) \bigr) & \ds (-i)\frac{(Q(n))^2 n }{z_j^n z_{j-1}^{n+1} r_j^{2n+1}} \left( 1-\frac{\ep_j}{\ep_{j-1}}\right)t^{-2n-1}\bigr(1 +o(1) \bigr)  \\
\nm
\ds i\frac{ z_{j-1}^n z_j^{n+1} r_j^{2n+1}(n+1) }{((2n+1)!!)^2 }\left(1-\frac{\ep_j}{\ep_{j-1}}\right)t^{2n+1} \bigr(1 +o(1) \bigr) & \ds \frac{Q(n) z_j^{n+1}}{(2n+1)!! z_{j-1}^{n+1}}\left( n+1+\frac{\ep_j}{\ep_{j-1}} n\right) \bigr(1 +o(1) \bigr)
\end{bmatrix}.
\end{align*}}

Using the behavior of spherical bessel functions for small arguments,
we see that $p_{n,1}^{TE}$ and $p_{n,2}^{TE}$ admit the following expansions:
\beq
p_{n,1}^{TE}[\mu,\ep,t] = t^n \left( \sum_{l=0}^{N-n} f_{n,l}^{TE}(\mu,\ep) t^{2 l} + o(t^{2N-2n}) \right)
\eeq
and
\beq
p_{n,2}^{TE}[\mu,\ep,t] = t^{-n-1} \left(  \sum_{l=0}^{N-n} g_{n,l}^{TE}(\mu,\ep) t^{2 l} + o(t^{2N-2n})  \right).
\eeq
Similarly, $p_{n,1}^{TM}$ and $p_{n,2}^{TM}$  have the following expansion:
\beq
p_{n,1}^{TM}[\mu,\ep,t] = t^n \left( \sum_{l=0}^{N-n} f_{n,l}^{TM}(\mu,\ep) t^{2 l} + o(t^{2N-2n}) \right)
\eeq
and
\beq
p_{n,2}^{TM}[\mu,\ep,t] = t^{-n-1} \left(  \sum_{l=0}^{N-n} g_{n,l}^{TM}(\mu,\ep) t^{2 l} + o(t^{2N-2n})  \right).
\eeq
for $t=\rho \omega$ and some functions $ f_{n,l}^{TE}, g_{n,l}^{TE},  f_{n,l}^{TM},$ and $g_{n,l}^{TM}$ independent of $t$.
\begin{lemma}\label{lemma:leading}
For any pair of $(\mu, \epsilon)$, we have
\beq \label{g_n0_TE}
g_{n,0}^{TE}(\mu,\ep) \neq 0
\eeq
and
\beq \label{g_n0_TM}
 g_{n,0}^{TM}(\mu,\ep)  \neq 0.
\eeq
\end{lemma}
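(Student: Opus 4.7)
The plan is to extract a closed form for $g_{n,0}^{TE}$ by pushing the small-$t$ expansions of $j_n,h_n^{(1)},\mathcal J_n,\mathcal H_n$ through the factorisation $P_n^{TE}= (\text{PEC block})\cdot\prod_j(\mathrm{adj}\,M_j^+)(M_j^-)$, and then to show the resulting leading coefficient is never zero. After carefully tracking the $t$-powers in each entry of each transfer factor, one finds that $p_{n,2}^{TE}(t)=g_{n,0}^{TE}\,t^{-n-1}+O(t^{-n+1})$ with
\[
g_{n,0}^{TE}=\begin{bmatrix}\dfrac{z_L^n}{(2n+1)!!} & \dfrac{-iQ(n)}{z_L^{n+1}}\end{bmatrix}\,\tilde M_L\tilde M_{L-1}\cdots\tilde M_1\begin{bmatrix}0\\1\end{bmatrix},
\]
where $\tilde M_j=\bigl[\begin{smallmatrix}A_j&B_j\\ C_j&D_j\end{smallmatrix}\bigr]$ is the constant $2{\times}2$ matrix read off from the leading terms: $A_j,D_j$ are real and strictly positive, $B_j,C_j$ are purely imaginary with common sign factor $(1-\mu_j/\mu_{j-1})$, and a direct identity-manipulation gives $\det\tilde M_j=z_j\mu_j/(z_{j-1}\mu_{j-1})>0$.

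The base case $L=1$ is a one-line check. Expanding the product explicitly yields
\[
g_{n,0}^{TE}\big|_{L=1}=\frac{-iQ(n)^2}{(2n+1)!!\,z_0^{n+1}}\Bigl[\frac{n(1-\mu_1/\mu_0)}{r_1^{2n+1}}+\bigl(n+1+n\mu_1/\mu_0\bigr)\Bigr],
\]
and rewriting the bracket as $n(1+r_1^{-(2n+1)})+n(\mu_1/\mu_0)(1-r_1^{-(2n+1)})+1$ shows it is strictly positive because $r_1>1$ and $\mu_0,\mu_1>0$.

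For the induction on $L$, I would reinterpret $g_{n,0}^{TE}$ as the output of a Möbius iteration. Setting $\eta_j:=a_j^{(2n+1)}/\tilde a_j^{(0)}$ (the leading $t^{2n+1}$-coefficient of $a_j/\tilde a_j$), the PEC condition at the core fixes $\eta_L=-iz_L^{2n+1}/(Q(n)(2n+1)!!)$, and the transfer across each interface updates $\eta_{j-1}=(A_j\eta_j-C_j)/(D_j-B_j\eta_j)$. The condition $g_{n,0}^{TE}=0$ is exactly the condition that $\eta_0=\infty$, i.e.\ that the iteration encounters the pole $\eta_j=D_j/B_j$ at some step. Because $\eta_L$ is purely imaginary and the sign pattern of $(A_j,B_j,C_j,D_j)$ forces the Möbius map to preserve the imaginary axis, the whole orbit is purely imaginary; writing $\eta_j=i\phi_j$ with $\phi_j\in\RR$ reduces the problem to a real one-dimensional recursion that I would show, by induction, never hits $D_j/b_j$.

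The hard part will be closing the inductive step: when $\mu_j/\mu_{j-1}$ oscillates across $1$ from layer to layer the off-diagonals $b_j,c_j$ change sign, so the naive positivity that worked for $L=1$ does not transport directly. The key ingredients I would use are (i) $\det\tilde M_j>0$, (ii) the sign-coherence $b_jc_j\geq 0$ inside each factor, and (iii) the strict geometric inequality $r_j>r_{j+1}\geq 1$, which makes the Möbius poles $D_j/b_j$ quantitatively far from the orbit generated by the PEC initial value. The TM statement $g_{n,0}^{TM}\neq 0$ follows from exactly the same argument with the roles of $\mu$ and $\epsilon$ interchanged.
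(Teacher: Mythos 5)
Your proposal is an outline of a genuinely different strategy from the paper's, but it does not close the argument: the decisive step is exactly the one you defer. You reduce the claim to showing that a real M\"obius iteration $\phi_{j-1}=(A_j\phi_j-c_j)/(D_j-b_j\phi_j)$, started from the PEC initial value, never hits the pole at the last step, and you yourself note that when the contrasts $\mu_j/\mu_{j-1}$ oscillate across $1$ the off-diagonal signs flip and the $L=1$ positivity argument does not transport. The three ingredients you list do not obviously suffice: $\det\tilde M_j>0$ and $b_jc_j\ge 0$ give no monotone or sign-definite invariant of the orbit, and the ``quantitative separation'' you hope to extract from $r_j>r_{j+1}\ge 1$ cannot be uniform, because the lemma must hold for \emph{every} choice of positive $\mu_j,\epsilon_j$, so the matrix entries (hence the pole locations $D_j/b_j$) range over unbounded sets; pole-avoidance for all admissible parameters is precisely the statement to be proved, not something that follows from crude estimates. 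There is also a smaller issue to repair: $\eta_0=\infty$ is equivalent to the denominator vanishing at the final step only, so "the iteration encounters the pole at some step" is not the right characterization of $g_{n,0}^{TE}=0$. As written, then, the proposal is a plausible program with an acknowledged hole, not a proof.

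For comparison, the paper avoids all layer-by-layer algebra with a short PDE uniqueness argument: if $g_{n,0}^{TE}(\mu,\epsilon)=0$, then $p_{n,2}^{TE}[\mu,\epsilon,\rho\omega]=o(\rho^{-n-1})$, so the exterior solution of the form \eqnref{E_n_TE} with $a_0=1$, $\tilde a_0=0$ has tangential trace $o(\rho^{-(n+1)})$ on $\p D$; rescaling by $\rho^{n+1}$ and letting $\rho\to 0$ (using \eqnref{yn_expansion}) produces a nontrivial limit $\bm V$ solving the static problem $\nabla\times(\mu^{-1}\nabla\times\bm V)=0$, $\nabla\cdot\bm V=0$ in $\RR^3\setminus\overline D$ with $\bm\nu\times\bm V=0$ on $\p D$ and decay at infinity, which Green's formula forces to vanish, a contradiction; the TM case is identical with $\mu$ and $\epsilon$ interchanged. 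If you want to salvage your transfer-matrix route, you would essentially need to inject this kind of uniqueness (or an energy/impedance positivity statement such as the sign-definiteness of the imaginary part of an admittance passed through each layer) as the invariant that keeps the M\"obius orbit away from the poles; without such an input the induction does not close.
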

\begin{proof}
Assume that there exists a pair of $(\mu, \epsilon)$ such that $g_{n,0}^{TE}(\mu,\ep) = 0$.
Since $p_{n,2}^{TE}[\mu,\ep,\rho\omega]=o(\rho^{-n-1})$, the solution given by \eqnref{E_n_TE} with $a_0=1$ and $\tilde{a}_0=0$ satisfies
\begin{equation*}
 \ \left \{
 \begin{array}{ll}
\ds \nabla \times\left( \frac{1}{\mu}
\nabla \times \bE\right) - \rho^2 \omega^2 \epsilon \bE = 0 \quad &\mbox{in } \RR^3 \setminus \overline{D},\\
\ds \nabla \cdot \bE =0  \quad &\mbox{in } \RR^3 \setminus \overline{D},\\
\ds(\bm{\nu} \times \bE)\bigr|_+ = o(\rho^{-(n+1)}) \quad &\mbox{on } \p D,\\
\ds\bE(\Bx) = h_n^{(1)} ( \rho k_0 |\Bx| ) \bm{V}_{n,0}(\hat{\Bx})
\quad &\mbox{for } |\Bx| > 2.
 \end{array}
 \right .
 \end{equation*}
Let $\bm{V}(\Bx)=\lim_{\rho \rightarrow 0} \rho^{n+1} \bE(\Bx)$.
Using \eqnref{yn_expansion}
we know that the limit $\bm{V}$ satisfies
\begin{equation*}
 \ \left \{
 \begin{array}{ll}
\ds \nabla \times\left( \frac{1}{\mu} \nabla \times \bm{V} \right) = 0 \quad &\mbox{in }\RR^3 \setminus \overline{D},\\
\ds \nabla \cdot \bm{V} =0 \quad &\mbox{in } \RR^3 \setminus \overline{D}, \\
\ds (\bm{\nu} \times \bm{V})\bigr|_+ = 0 \quad &\mbox{on } \p D,\\
\ds \bm{V}(\Bx) = -\bigr((2n-1)!!)\bm{V}_{n,0}(\hat{\Bx})
\quad &\mbox{for } |\Bx| > 2.
 \end{array}
 \right .
 \end{equation*}
Since $\bm{V}_{n,0}(\hat{\Bx})=O(|x|^{-1})$, we get $\bm{V}(\Bx)=0$ by Green's formula, which is a contradiction. Thus $g_{n,0}^{TE}(\mu,\ep) \neq 0$.  In a similar way, \eqnref{g_n0_TM} can be proved.
\end{proof}

From Lemma \ref{lemma:leading}, we have the following theorem.
\begin{prop} \label{scatt_coeff_t}
We have
$$ W_{n}^{TE}[\mu,\ep,t] = t^{2n+1} \sum_{l=0}^{N-n}
W_{n,l}^{TE}[\mu,\ep] t^{2l} + o(t^{2N+1}) ,$$
 and
 $$
W_{n}^{TM}[\mu,\ep,t] = t^{2n+1} \sum_{l=0}^{N-n}
W_{n,l}^{TM}[\mu,\ep] t^{2l} + o(t^{2N+1}), $$
where $t=\rho\omega$ and the coefficients $W_{n,l}^{TE}[\mu,\ep]$ and
$W_{n,l}^{TM}[\mu,\ep]$ are independent of $t$.
\end{prop}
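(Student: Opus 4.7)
The claim follows directly from the identities \eqnref{Wnte} and \eqnref{Wntm}, which express $W_n^{TE}[\mu,\ep,t]=-\frac{in(n+1)}{k_0}\,\frac{p_{n,1}^{TE}[\mu,\ep,t]}{p_{n,2}^{TE}[\mu,\ep,t]}$ and its TM analogue, combined with the polynomial asymptotics of $p_{n,j}^{TE/TM}$ displayed immediately before Lemma \ref{lemma:leading} and the non-vanishing of the leading denominator coefficients supplied by that lemma. My plan is to invert the denominator series, take a Cauchy product with the numerator, and then collect powers of $t$.

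First, by Lemma \ref{lemma:leading} we have $g_{n,0}^{TE}(\mu,\ep)\neq 0$ (and analogously $g_{n,0}^{TM}\neq 0$). Factoring $g_{n,0}^{TE}$ out of the expansion of $p_{n,2}^{TE}$ leaves an inner series of the form $1+\sum_{l\geq 1}(g_{n,l}^{TE}/g_{n,0}^{TE})\,t^{2l}+o(t^{2(N-n)})$ whose constant term is $1$; the reciprocal therefore admits an asymptotic expansion via a Neumann/geometric series truncated at order $t^{2(N-n)}$:
\begin{equation*}
\frac{1}{p_{n,2}^{TE}[\mu,\ep,t]} = t^{\,n+1}\Bigl(\sum_{l=0}^{N-n}\widetilde g_{n,l}^{TE}[\mu,\ep]\,t^{2l}+o(t^{2(N-n)})\Bigr),
\end{equation*}
where each $\widetilde g_{n,l}^{TE}$ is a polynomial in $1/g_{n,0}^{TE}$ and in $g_{n,1}^{TE},\ldots,g_{n,l}^{TE}$.

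Next, multiplying this reciprocal by the expansion $p_{n,1}^{TE}[\mu,\ep,t]=t^n\bigl(\sum_{l=0}^{N-n}f_{n,l}^{TE}\,t^{2l}+o(t^{2(N-n)})\bigr)$ via a Cauchy product of truncated asymptotic series gives
\begin{equation*}
\frac{p_{n,1}^{TE}[\mu,\ep,t]}{p_{n,2}^{TE}[\mu,\ep,t]} = t^{\,2n+1}\Bigl(\sum_{l=0}^{N-n}h_{n,l}^{TE}[\mu,\ep]\,t^{2l}+o(t^{2(N-n)})\Bigr),
\end{equation*}
where $h_{n,l}^{TE}$ is the convolution of the $f_{n,\cdot}^{TE}$'s and $\widetilde g_{n,\cdot}^{TE}$'s. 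Substituting into \eqnref{Wnte} and absorbing the prefactor $-in(n+1)/k_0$ (together with $\sqrt{\ep_0\mu_0}$) into the coefficients yields
\begin{equation*}
W_n^{TE}[\mu,\ep,t] = t^{2n+1}\sum_{l=0}^{N-n}W_{n,l}^{TE}[\mu,\ep]\,t^{2l}+o(t^{2N+1}),
\end{equation*}
as claimed. The $W_n^{TM}$ expansion follows by the same manipulation applied to $p_{n,j}^{TM}$ and the nondegeneracy $g_{n,0}^{TM}\neq 0$.

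The one delicate point is that the Neumann inversion and the Cauchy product must genuinely preserve the $o(t^{2N+1})$ remainder, which is standard provided the leading denominator coefficients $g_{n,0}^{TE},g_{n,0}^{TM}$ are bounded away from zero. That input is precisely what Lemma \ref{lemma:leading} supplies; every other step reduces to routine formal manipulation of truncated asymptotic series.
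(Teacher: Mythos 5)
Your argument is exactly the one the paper intends: the paper offers no details beyond the remark that the proposition follows from Lemma \ref{lemma:leading} together with the displayed low-frequency expansions of $p_{n,1}^{TE/TM}$ and $p_{n,2}^{TE/TM}$ and the formulas \eqnref{Wnte}, \eqnref{Wntm}, and your inversion of the denominator series (licensed by $g_{n,0}^{TE},g_{n,0}^{TM}\neq 0$) followed by a truncated Cauchy product is precisely the intended filling-in of that step. The bookkeeping of the remainders, $t^{2n+1}\,o(t^{2(N-n)})=o(t^{2N+1})$, is handled correctly, so the proposal is correct and essentially identical in approach to the paper's.
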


Hence, if we have $(\mu,\epsilon)$ such that
\beq\label{eqn:Scondition} W_{n,l}^{TE}[\mu, \epsilon] = W_{n,l}^{TM}[\mu,\ep]=0,\quad\mbox{for all}~1 \leq n \leq N,\ 0\leq l\leq (N-n),
\eeq
$(\mu,\epsilon)$ satisfies \eqnref{Sstructure}, in other words,
it is a \textit{S-vanishing structure of order $N$ at low frequencies}.
It is quite challenging to construct $(\mu,\epsilon)$ analytically satisfying \eqnref{eqn:Scondition}. In the next section we get some numerical examples of such structures.

\subsection{Numerical examples}
In this section we provide numerical examples of S-vanishing structures of order $N$ at low frequencies based on \eqnref{eqn:Scondition}.
To do this, the gradient descent method for the suitable energy functional is used, as used in \cite{AKLL1} and \cite{AKLL2} to compute the enhanced near-cloaking structures for the conductivity problem and the Helmholtz problem.
As in \cite{AKLL2}, we symbolically compute the scattering coefficients. In the place of spherical Bessel functions and spherical Hankel functions, we put its low frequency asymptotic expansions in \eqnref{eqn:PTE} and \eqnref{eqn:PTM}, and symbolically compute  $W_n^{TE}$ and $W_n^{TM}$ to have $W_{n,l}^{TE}[\mu, \epsilon]$ and $W_{n,l}^{TM}[\mu,\ep]$.

The following example is a S-vanishing structure of order $N=2$ made of 6 multilayers. The radii of the concentric disks are
 $r_j = 2-\frac{j-1}{6}$ for $j=1,\dots,7$. From Proposition \ref{scatt_coeff_t},
the nonzero leading terms of $W_n^{TE}[ \mu, \epsilon, t]$ and $W_n^{TM}[ \mu, \epsilon, t]$ up to $t^5$ are
\begin{itemize}
\item $[t^3,t^5]$ terms in  $W_1^{TE}[ \mu, \epsilon, t]$, \textit{i.e.},  $W_{1,0}^{TE}, W_{1,1}^{TE}$,
\item $[t^3,t^5]$ terms in  $W_1^{TM}[ \mu, \epsilon, t]$, \textit{i.e.},  $ W_{1,0}^{TM}, W_{1,1}^{TM}$,
\item $[t^5]$ term in  $W_2^{TE}[ \mu, \epsilon, t]$, \textit{i.e.},  $W_{2,0}^{TE}$,
\item $[t^5]$ term in  $W_2^{TM}[ \mu, \epsilon, t]$, \textit{i.e.},  $W_{2,0}^{TM}$.
\end{itemize}
Consider the mapping
 \beq\label{Msigma}
 \ds({\boldsymbol\mu},\bm{\ep})\longrightarrow (W_{1,0}^{TE}, W_{1,1}^{TE},  W_{1,0}^{TM}, W_{1,1}^{TM},  W_{2,0}^{TE}, W_{2,0}^{TM}),
 \eeq
where, $\bm{\mu}=(\mu_1,\dots,\mu_6)$ and $\bm{\ep}=(\ep_1,\dots,\ep_6)$. We look for $(\bm{\mu}, \bm{\ep})$ which has the right-hand side of \eqnref{Msigma} as small as possible. Since \eqnref{Msigma} is a nonlinear equation, we solve it iteratively. Initially,  we wet $\bm{\mu}=\bm{\mu}^{(0)}$ and $\bm{\ep}=\bm{\ep}^{(0)}$.
We iteratively modify $(\bm{\mu}^{(i)},\bm{\ep}^{(i)})$
\beq\label{iter}[\boldsymbol\mu^{(i+1)}\ \bm{\ep}^{(i+1)}]^T = \boldsymbol[\boldsymbol\mu^{(i)}\ \bm{\ep}^{(i)}]^T - A_i^\dag \textbf{b}^{(i)},
\eeq
where $A_i^\dag$ is the pseudoinverse of
$$
A_i:=\frac{\partial (W_{1,0}^{TE}, W_{1,1}^{TE}, \dots, W_{2,0}^{TM})}{\partial( \bm{\mu},\bm{\ep})}\Bigr|_{( \bm{\mu},\bm{\ep}) =  ( \bm{\mu}^{(i)},\bm{\ep}^{(i)})},$$
and
$$\textbf{b}^{(i)} = \left.\begin{bmatrix}
W_{1,0}^{TE} \\ W_{1,1}^{TE}\\\vdots\\W_{2,0}^{TM} \end{bmatrix}\right|_{( \bm{\mu},\bm{\ep}) =  ( \bm{\mu}^{(i)},\bm{\ep}^{(i)})}.$$

\noindent{\bf Example 1}. Figure \ref{figure:1} and Figure \ref{figure:2} show
computational results of 6-layers S-vanishing structure of order $N=2$. We set $\bm{r}=(2, \frac{11}{6},\dots, \frac{7}{6})$, $\bm{\mu}^{(0)}=(3, 6, 3, 6, 3, 6)$ and $\bm{\ep}^{(0)}=(3, 6, 3, 6, 3, 6)$ and modify them following \eqnref{iter} with the constraints that $\mu$ and $\ep$ belongs to the interval between 0.1 and 10. The obtained material parameters are $\bm{\mu}=(0.1000, 1.1113, 0.2977, 2.0436, 0.1000, 1.8260)$ and $\bm{\ep}=(0.4356, 1.1461, 0.2899, 1.8199, 0.1000, 3.1233)$, respectively. Differently from the no-layer structure with PEC condition at $|\Bx|=1$, the obtained multilayer structure has the nearly zero coefficients of $W_n^{TE}[ \mu, \epsilon, t]$ and $W_n^{TM}[ \mu, \epsilon, t]$ up to $t^5$.

\begin{figure}[h!]
\begin{center}
\hskip.2cm
\epsfig{figure=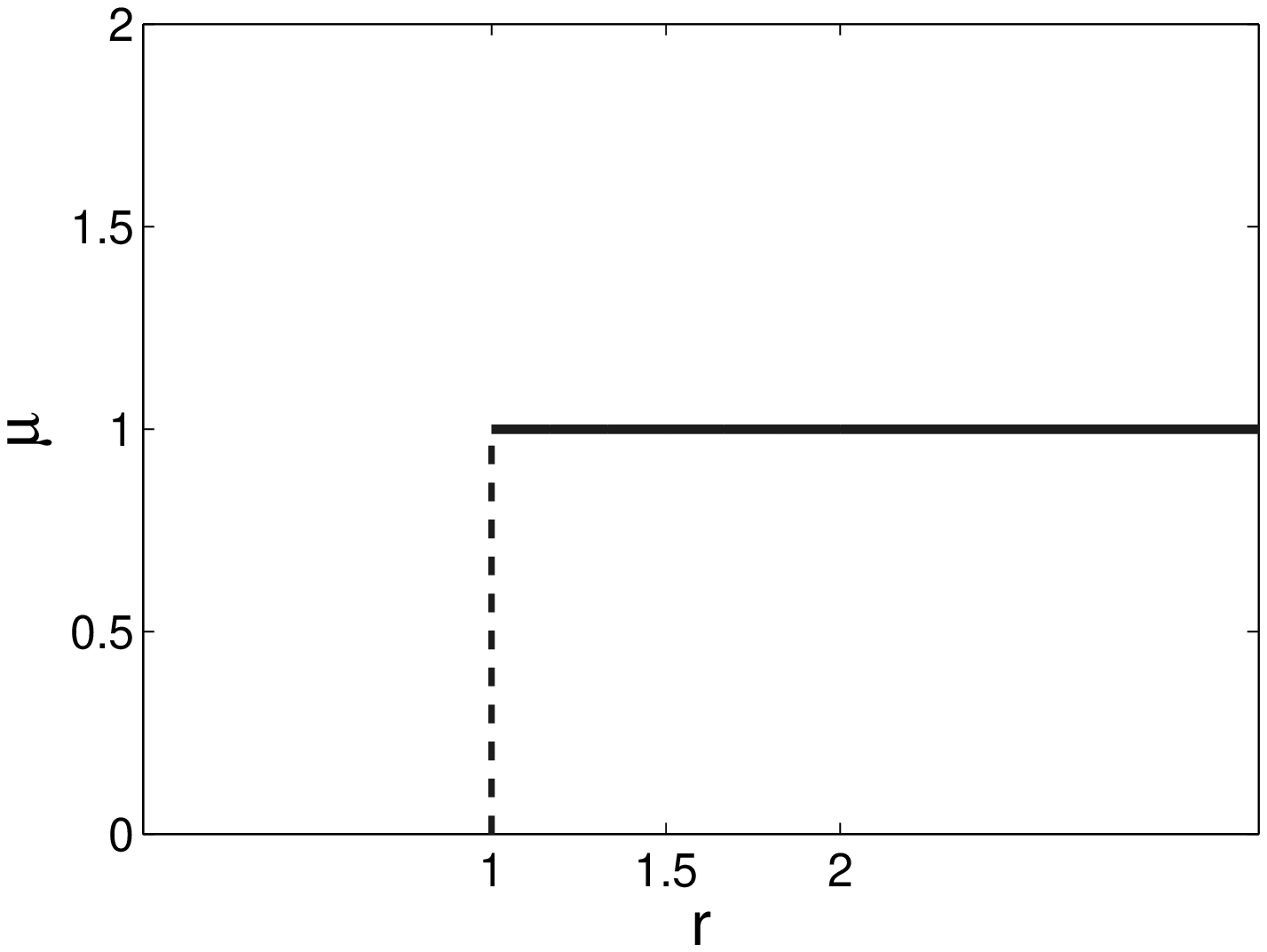, height=3cm}\hskip .2cm
\epsfig{figure=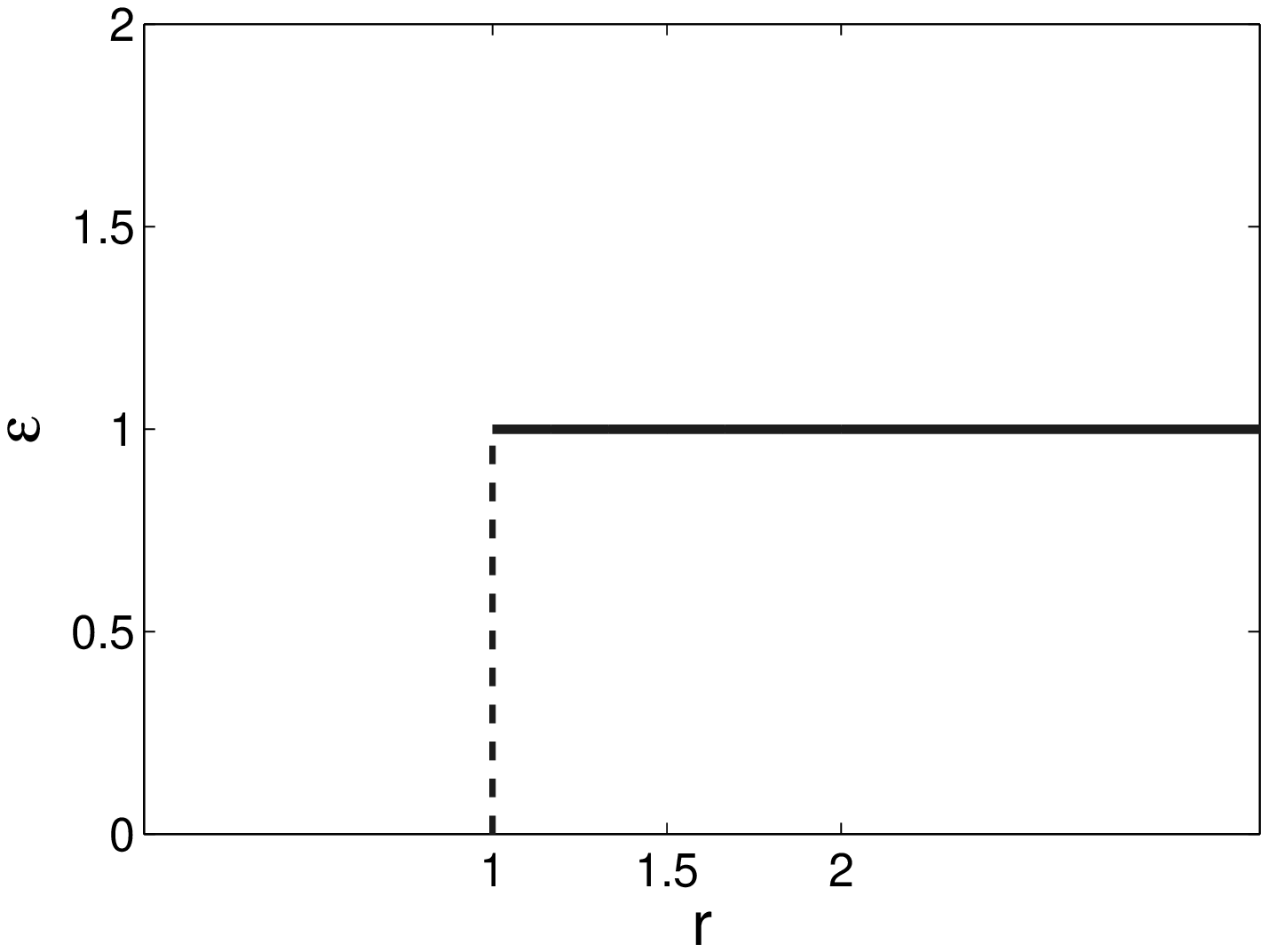, height=3cm}\hskip .3cm
\epsfig{figure=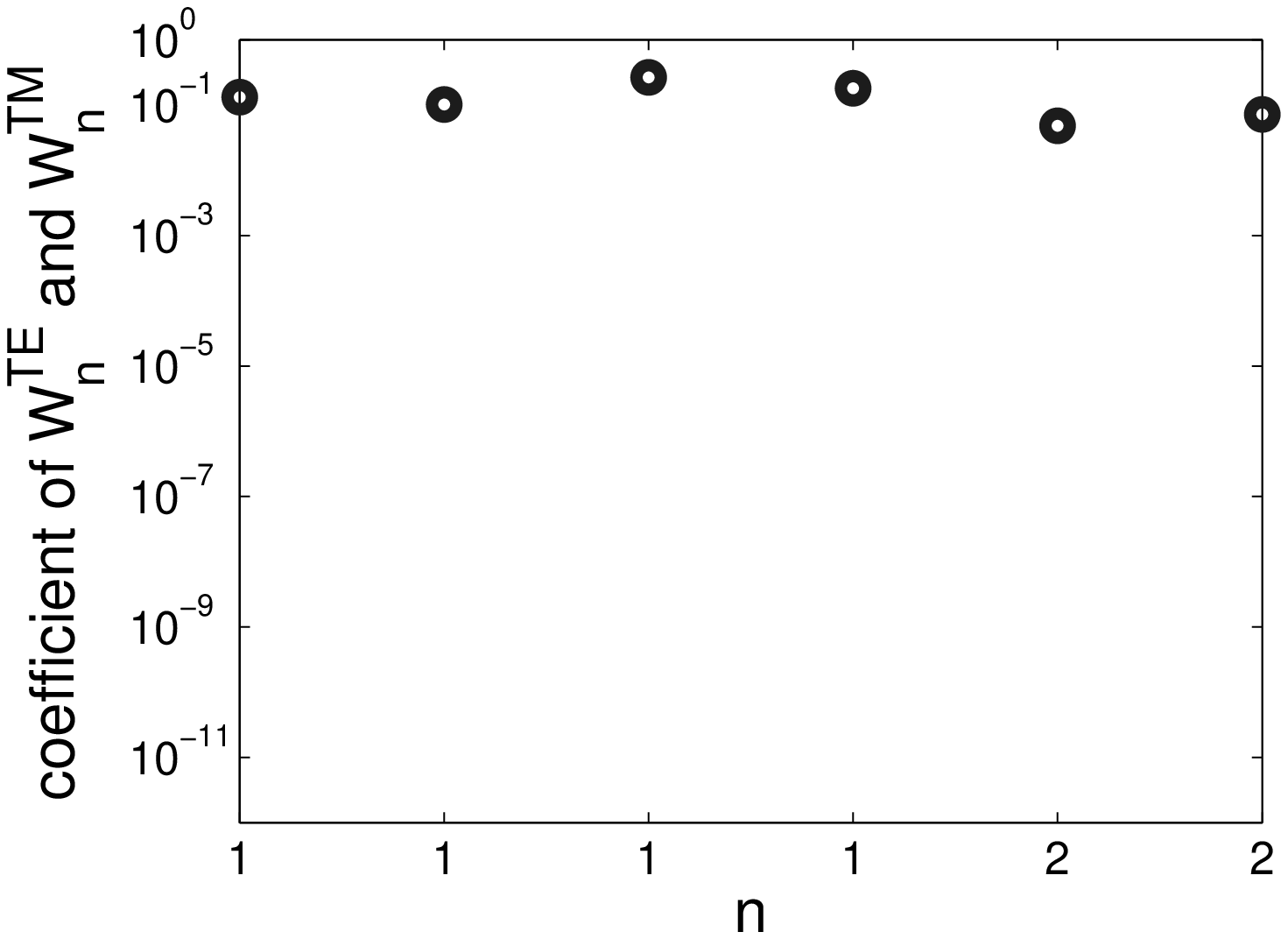, height=3cm}\\[.2cm]
\epsfig{figure=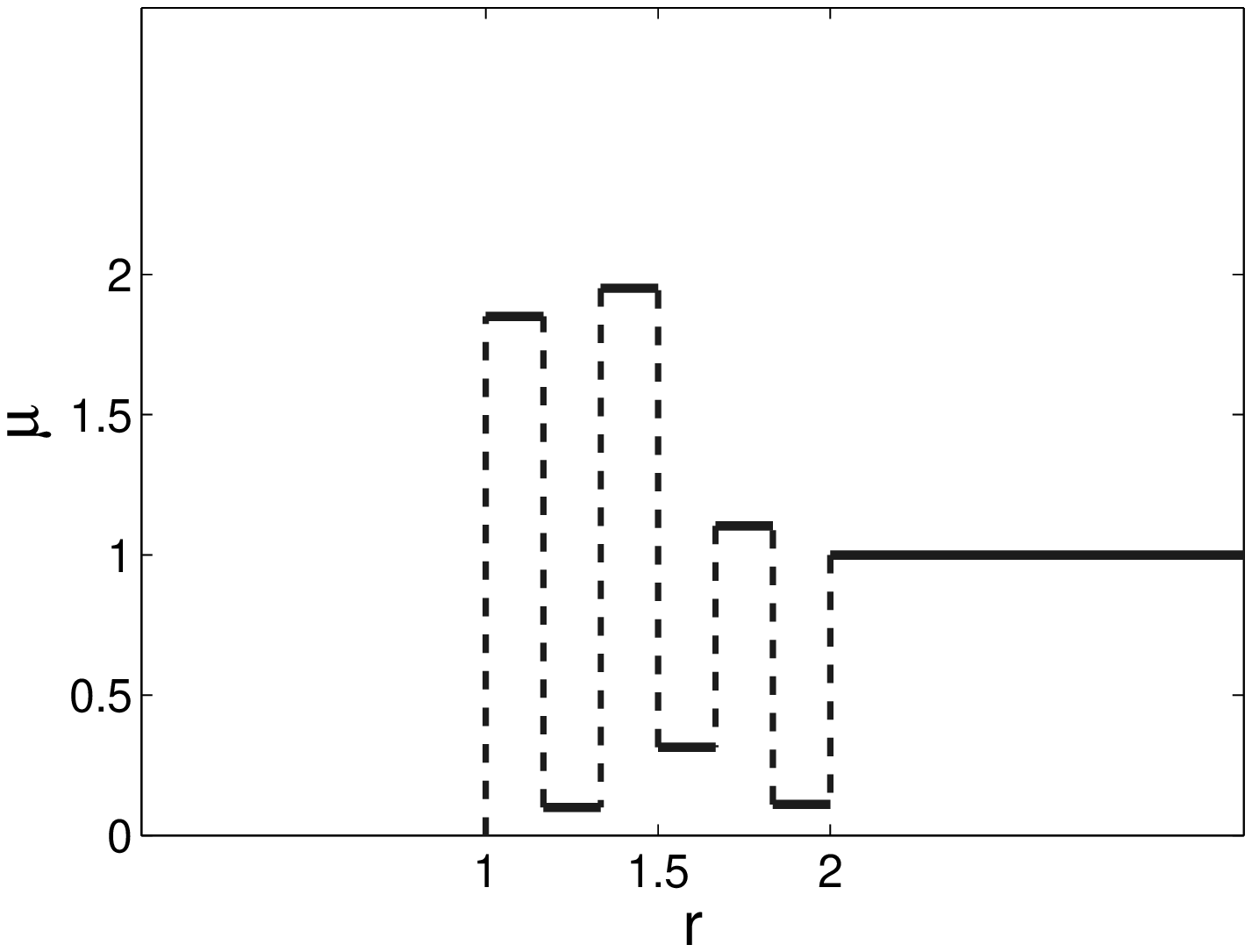, height=3cm}\hskip .2cm
\epsfig{figure=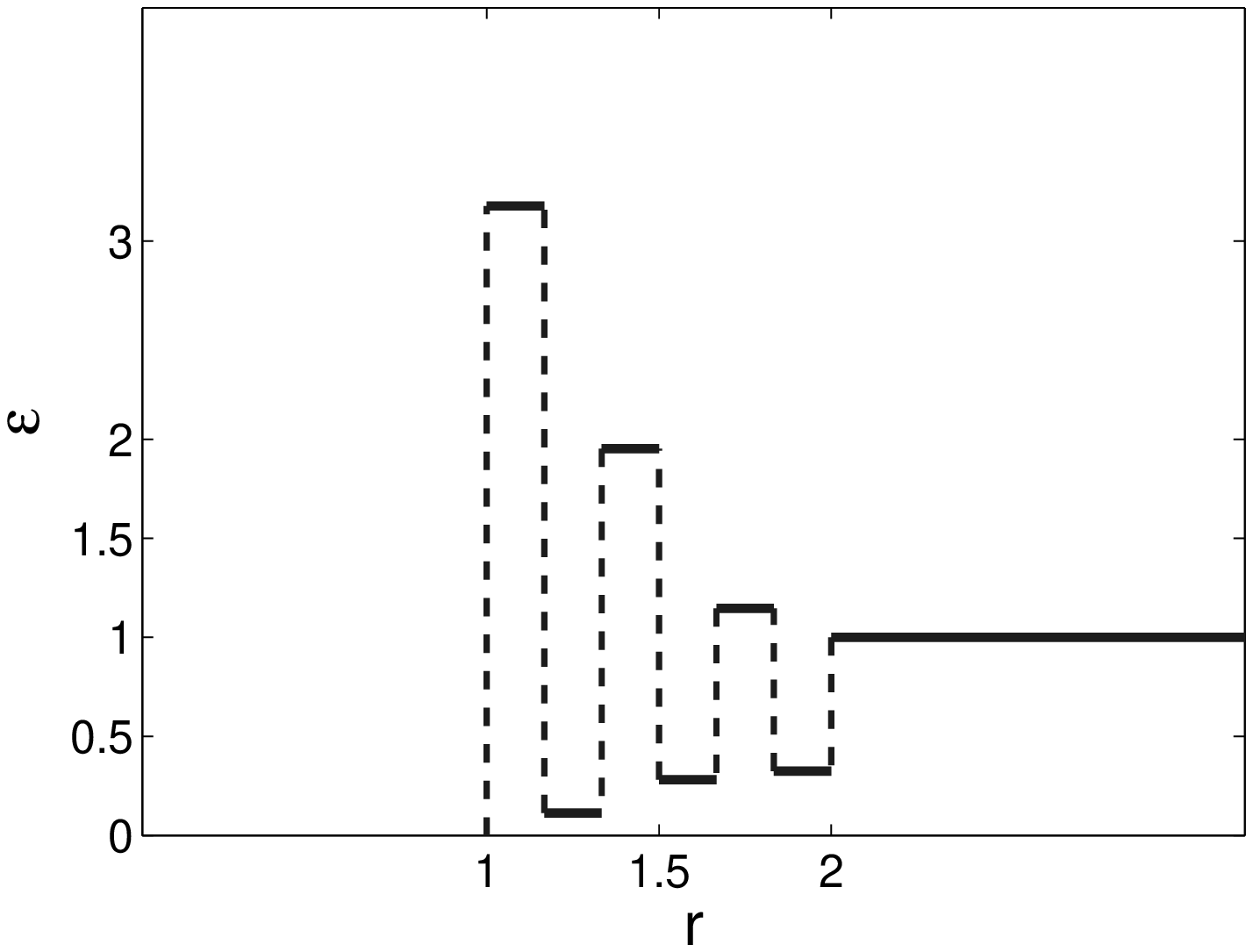, height=3cm}\hskip.3cm
\epsfig{figure=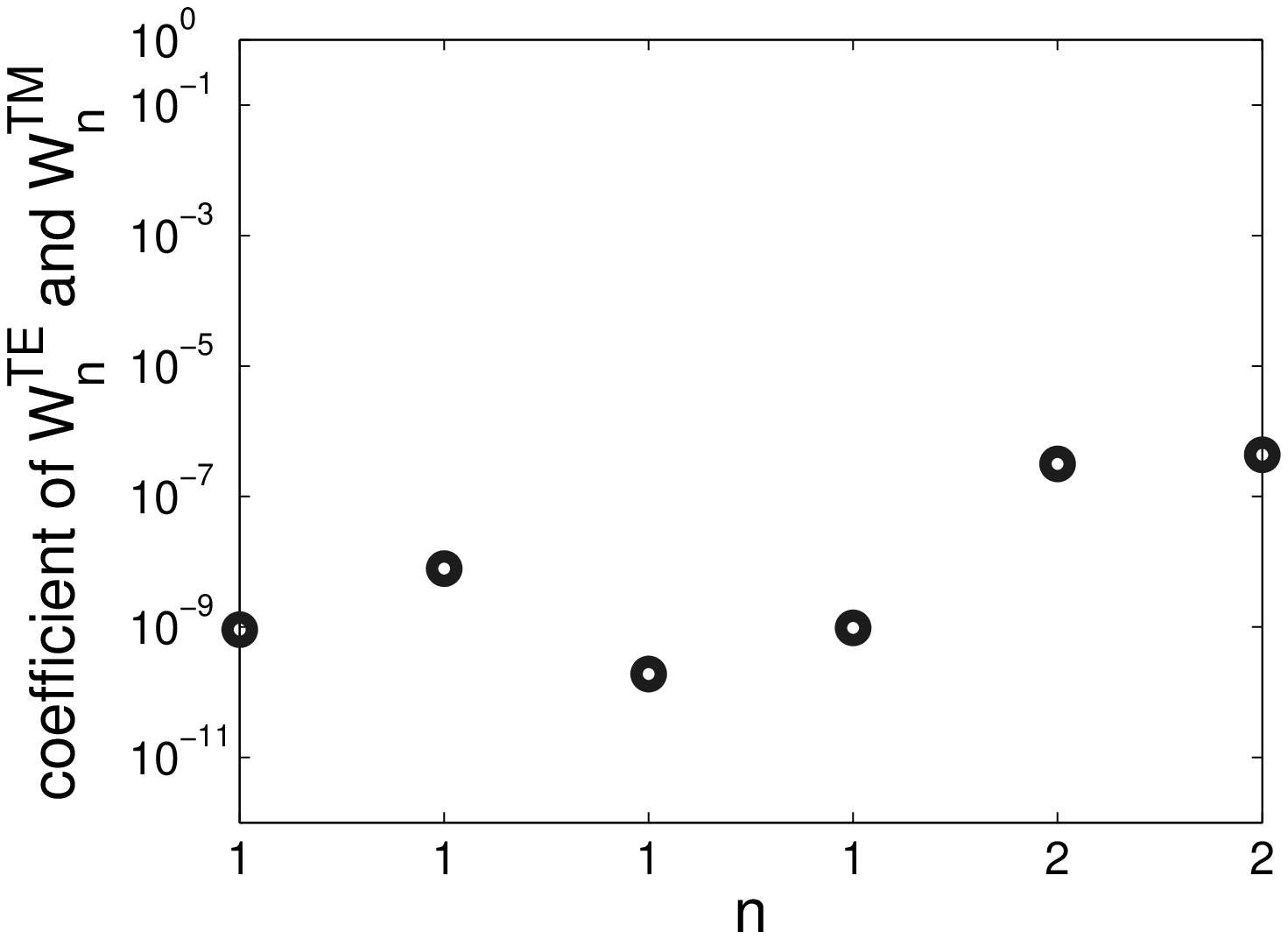, height=3cm}
\end{center}
\caption{This figure shows the graph of the material parameters and the corresponding coefficients in $W_n^{TE}[ \mu, \epsilon, t]$ and $W_n^{TM}[ \mu, \epsilon, t]$ up to $t^5$. The first row is of the no-layer case, and the second row is of 6-layers S-vanishing structure of order $N=2$ which is explained in Example 1. In the third column, the $y$-axis shows  $(W_{1,0}^{TE}, W_{1,1}^{TE},  W_{1,0}^{TM}, W_{1,1}^{TM},  W_{2,0}^{TE}, W_{2,0}^{TM})$ from the left to the right.}\label{figure:1}
\end{figure}

\begin{figure}[h!]
\begin{center}
\epsfig{figure=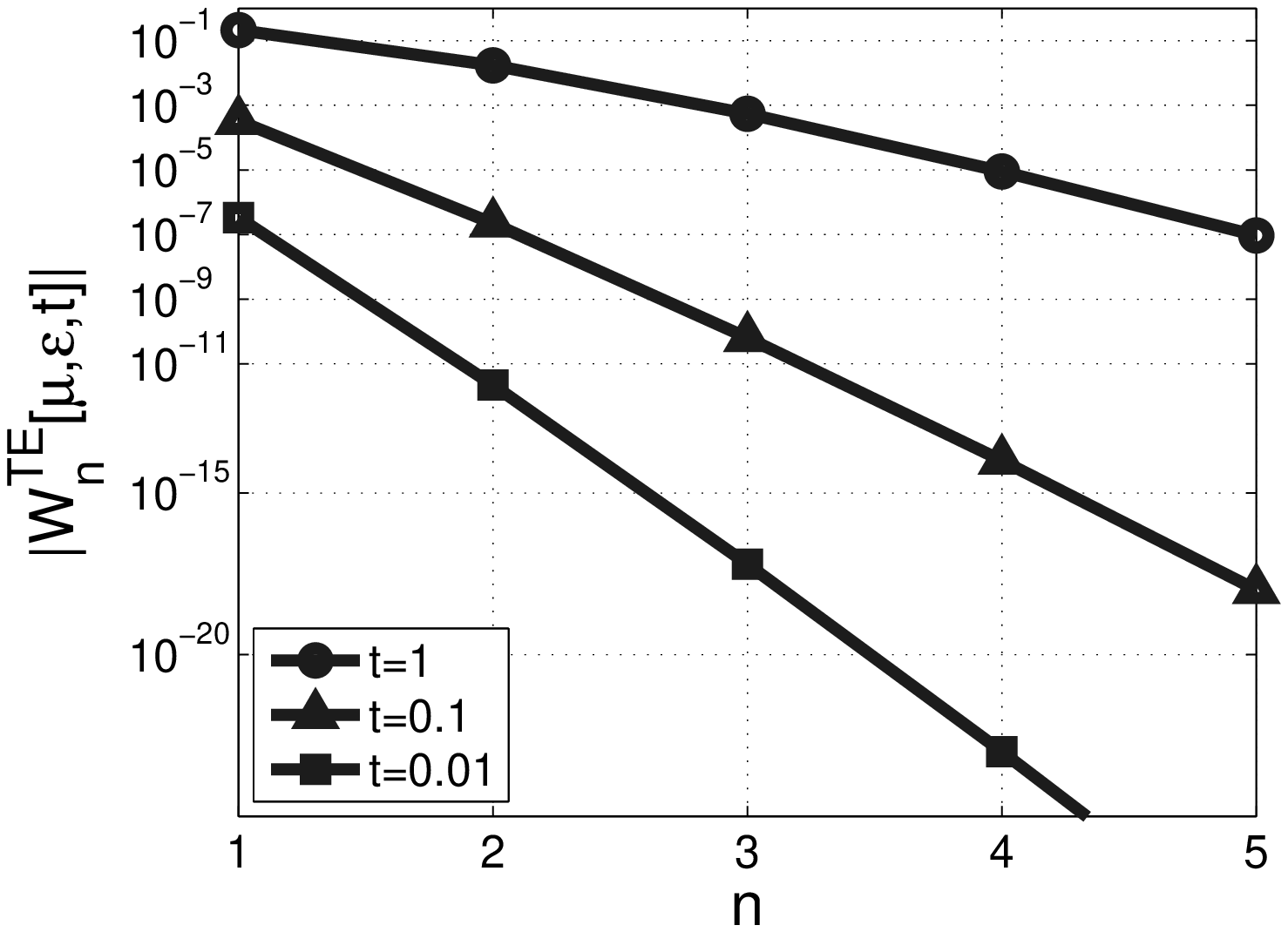,width=5cm}\hskip
.3cm \epsfig{figure=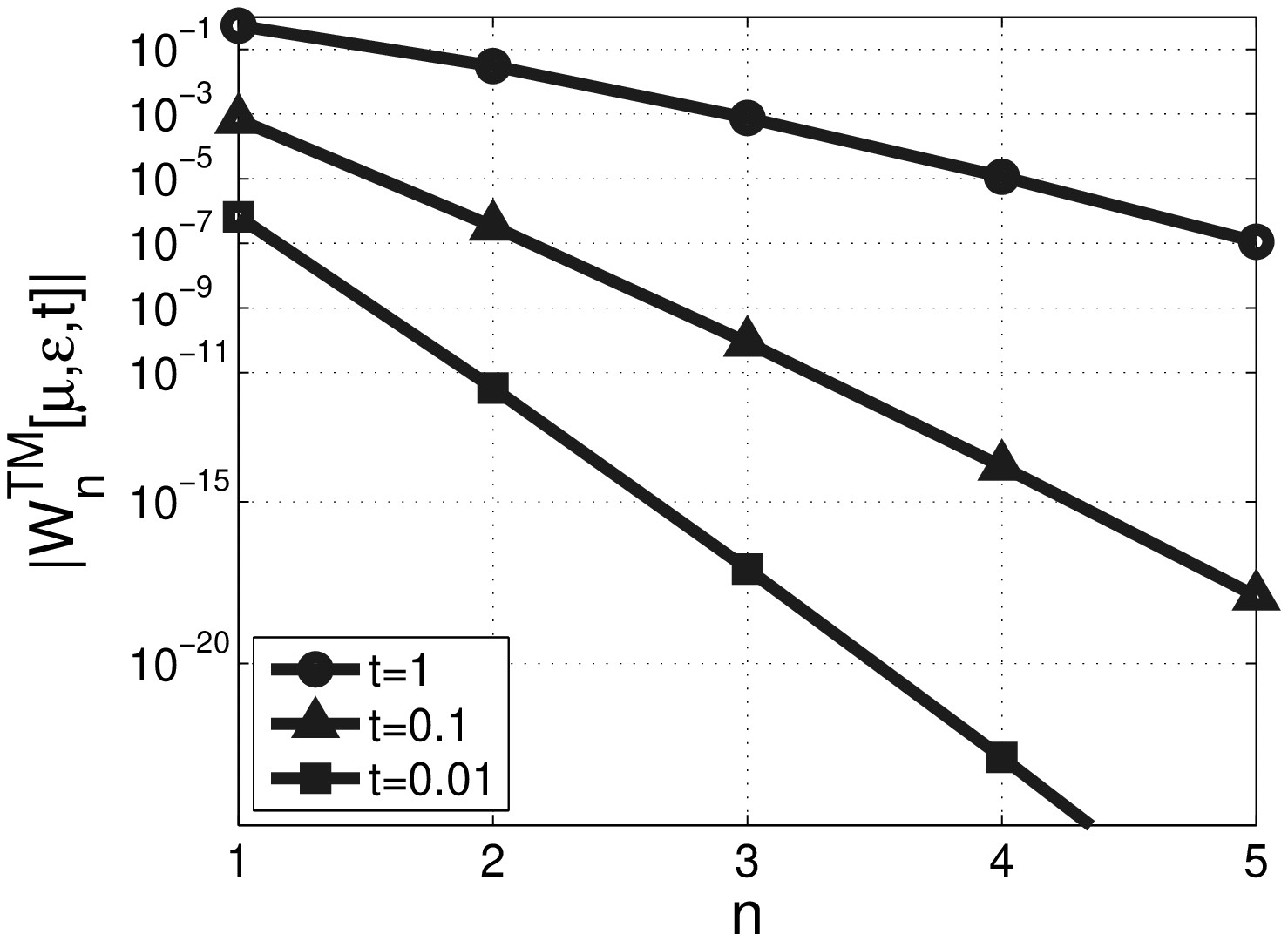,
width=5cm}\\[.2cm]
\epsfig{figure=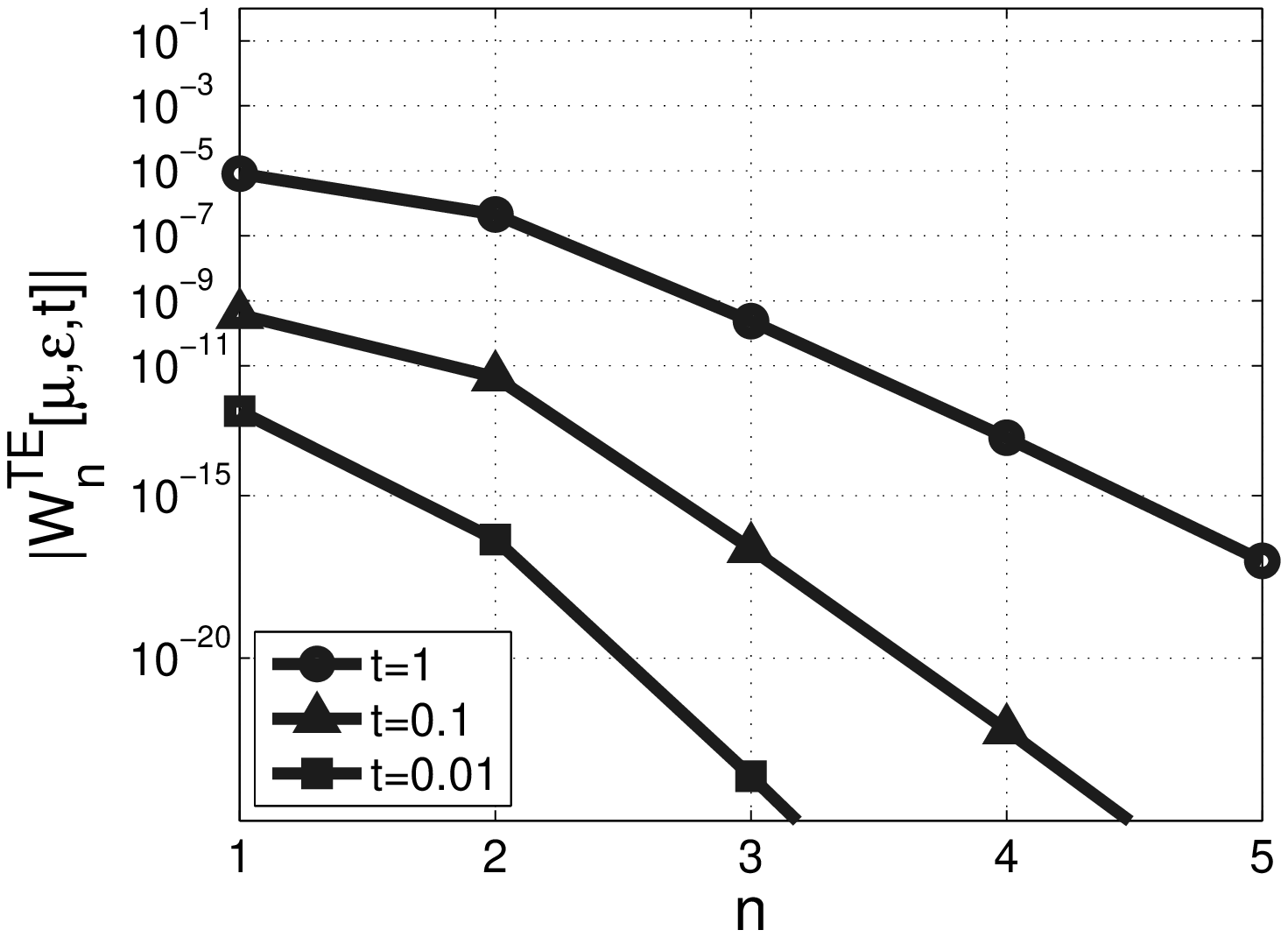,
width=5cm}\hskip .3cm
\epsfig{figure=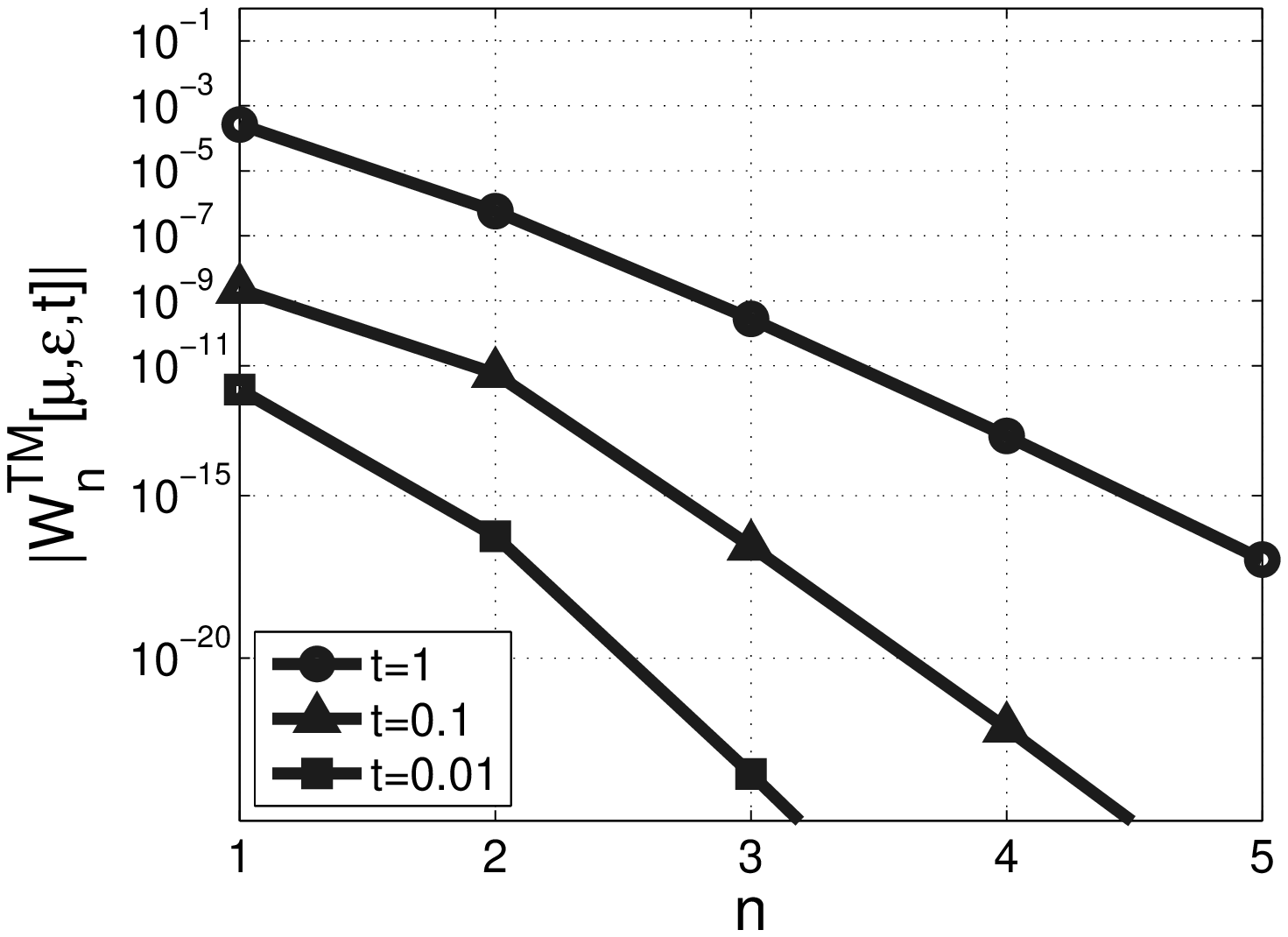,
width=5cm}
\end{center}
\caption{This figure shows the graph of $W_n^{TE}[ \mu, \epsilon, t]$ and $W_n^{TM}[ \mu, \epsilon, t]$ for various values of $t$.
The first row is of the no-layer case, and the second row if of 6-layers S-vanishing structure of order $N=2$ which is explained in Example 1. The values of $W_n^{TE}$ and $W_n^{TM}$ are much smaller in the S-vanishing structure than in the no-layer structure.}\label{figure:2}
\end{figure}

 \section{Enhancement of near cloaking}
We make a cloaking structure based on the following lemma.
\begin{lemma} \label{change_of_var}
Let $F$ be a diffeomorphism of $\RR^3 $onto $\RR^3$ such that
$F(\Bx)$ is identity for $|\Bx|$ large enough. If $(\bE,\bH)$ is a
solution to \beq
 \ \left \{
 \begin{array}{ll}
\nabla\times{\bE} = i\omega\mu{\bH} \quad &\mbox{in } \RR^3,\\
\nm
\nabla\times{\bH} = -i\omega\epsilon{\bE} \quad &\mbox{in } \RR^3,\\
\nm (\bE - {\bE}^i,\bH - {\bH}^i) \mbox{ is radiating,}
 \end{array}
 \right .
 \eeq
then $(\widetilde{\bE},\widetilde{\bH}) $ defined by
$(\widetilde{\bE}(\By),\widetilde{\bH}(\By))=\bigr(\bE(F^{-1}(\By)),\bH(F^{-1}(\By))\bigr)$
satisfies
\begin{equation*}
 \ \left \{
 \begin{array}{ll}
\ds\nabla\times\widetilde{\bE} = i\omega(F_*\mu)\widetilde{\bH} \quad &\mbox{in } \RR^3,\\
\ds\nabla\times\widetilde{\bH} = -i\omega(F_*\epsilon)\widetilde{\bE} \quad &\mbox{in } \RR^3,\\
\ds  (\widetilde{\bE} \widetilde{\bE}^i,\widetilde{\bH} -
\widetilde{\bH}^i) \mbox{ is radiating},
 \end{array}
 \right .
 \end{equation*}
where
$(\widetilde{\bE}^i(\By),\widetilde{\bH}^i(\By))=\bigr(\bE^i(F^{-1}(\By)),\bH^i(F^{-1}(\By))\bigr)$,
$$ (F_* \mu) (\By) = \frac{DF(\Bx) \mu(\Bx) DF^T(\Bx)}{\det
(DF(\Bx))}, \quad \mbox{and} \quad (F_* \epsilon) (\By) =
\frac{DF(\Bx) \epsilon(\Bx) DF^T(\Bx)}{\det (DF(\Bx))}, $$ with
$\Bx = F^{-1}(\By)$ and $DF$ is the Jacobian matrix of $F$.

Hence,
$$\bm{A}[\mu,\epsilon,\omega]=\bm{A}[F_*\mu,F_*\epsilon,\omega].$$
\end{lemma}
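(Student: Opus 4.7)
The plan is to prove the lemma by direct calculation, using the chain rule combined with the covariant behavior of the curl operator under diffeomorphisms. The argument splits naturally into three parts: transforming Maxwell's system, preserving the radiation condition, and identifying the far-field patterns.

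First, with $\By = F(\Bx)$, I would compute $\nabla_\By\times\widetilde{\bE}$ in terms of $\nabla_\Bx\times\bE$. Treating $\bE$ and $\bH$ as $1$-forms (so that the proper transformation rule for their components involves $DF^{-T}$) and $\mu,\epsilon$ as metric-like tensor densities, the standard transformation-optics identity reads
\[
(\nabla_\By\times\widetilde{\bE})(\By)=\frac{1}{\det DF(\Bx)}\,DF(\Bx)\,(\nabla_\Bx\times\bE)(\Bx).
\]
Substituting $\nabla_\Bx\times\bE = i\omega\mu\bH$ on the right, and recognizing $DF\,\mu\,DF^T/\det DF$ as $(F_*\mu)(\By)$ acting on the correspondingly transformed $\widetilde{\bH}$, yields $\nabla_\By\times\widetilde{\bE}=i\omega(F_*\mu)\widetilde{\bH}$. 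The same manipulation applied to the second Maxwell equation gives $\nabla_\By\times\widetilde{\bH}=-i\omega(F_*\epsilon)\widetilde{\bE}$, so $(\widetilde{\bE},\widetilde{\bH})$ solves Maxwell's system with the pushed-forward parameters.

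Second, because $F$ is the identity for $|\Bx|$ large, we have $\widetilde{\bE}=\bE$, $\widetilde{\bH}=\bH$, $\widetilde{\bE}^i=\bE^i$ and $\widetilde{\bH}^i=\bH^i$ outside some fixed compact set. Hence $(\widetilde{\bE}-\widetilde{\bE}^i,\widetilde{\bH}-\widetilde{\bH}^i)$ agrees with $(\bE-\bE^i,\bH-\bH^i)$ at infinity and therefore inherits the Silver--M\"{u}ller radiation condition. The equality $\bm{A}[\mu,\epsilon,\omega]=\bm{A}[F_*\mu,F_*\epsilon,\omega]$ follows at once, because the scattering amplitude is extracted from the leading $|\By|\to\infty$ asymptotics of the scattered field, and these asymptotics are unchanged by the modification of the fields on a bounded region.

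The main technical obstacle is the curl transformation formula. A brute-force component-wise chain-rule computation quickly becomes tangled in products of entries of $DF^{-1}$, and extracting exactly the matrix combination $DF\,\mu\,DF^T/\det DF$ requires careful bookkeeping of indices. The cleanest route is the differential-forms viewpoint: regard $\bE,\bH$ as $1$-forms and $\epsilon\bE,\mu\bH$ as the Hodge duals of $2$-forms relative to the metric encoded by $\epsilon,\mu$; then $d$ commutes with pullback, the Hodge star of the pulled-back metric is the one built from $F_*\epsilon,F_*\mu$, and the correct Jacobian factors drop out automatically. Once this framework is in place, each remaining step is routine.
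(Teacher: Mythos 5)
The paper states Lemma \ref{change_of_var} without proof---it is the standard change-of-variables (transformation-optics) identity---so there is no argument of the authors to compare against; your pullback/differential-forms route is the standard one, and the second half of your argument (the radiation condition and the far-field equality, using that $F$ is the identity outside a compact set so all fields coincide near infinity) is sound, up to an implicit appeal to uniqueness of the transformed scattering problem in order to identify $\widetilde{\bE}$ with the solution that defines $\bm{A}[F_*\mu,F_*\epsilon,\omega]$.

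There is, however, one concrete inconsistency you must resolve. The curl identity you display, $(\nabla_\By\times\widetilde{\bE})(\By)=(\det DF(\Bx))^{-1}DF(\Bx)\,(\nabla_\Bx\times\bE)(\Bx)$, holds when $\widetilde{\bE}$ is the $1$-form pullback, i.e. $\widetilde{\bE}(\By)=\bigl(DF(\Bx)^{T}\bigr)^{-1}\bE(\Bx)$ with $\Bx=F^{-1}(\By)$; it is false for the plain composition $\bE\circ F^{-1}$ that the lemma (and your first paragraph) uses. For instance, if $DF=\mathrm{diag}(a,b,c)$ locally with distinct entries, the first component of $\nabla_\By\times(\bE\circ F^{-1})$ is $b^{-1}\partial_{x_2}E_3-c^{-1}\partial_{x_3}E_2$, while the right-hand side of your identity has first component $a(bc)^{-1}(\partial_{x_2}E_3-\partial_{x_3}E_2)$; correspondingly, the Maxwell system with $F_*\mu$, $F_*\epsilon$ is satisfied by the covariantly transformed fields, not by the plain compositions. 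So you should either state explicitly that you are proving the intended (standard) version with $\widetilde{\bE}=(DF^{T})^{-1}\bE\circ F^{-1}$ and $\widetilde{\bH}=(DF^{T})^{-1}\bH\circ F^{-1}$, noting that the formula in the statement suppresses this Jacobian factor, or your displayed identity and hence the first two transformed equations do not follow as written. None of this affects the final conclusion $\bm{A}[\mu,\epsilon,\omega]=\bm{A}[F_*\mu,F_*\epsilon,\omega]$, since $DF=I$ wherever $F$ is the identity, so the transformed and original fields still agree near infinity and the far-field patterns coincide.
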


To compute the scattering amplitude which corresponds to the
material parameters before the transformation, we consider the
following scaling function, for small parameter $\rho$,
$${\Psi}_{\frac{1}{\rho}} (\Bx) = \frac{1}{\rho} \Bx, \quad
\Bx \in \RR^3.$$  Then we have the following relation between the
scattering amplitudes which correspond to two sets of differently
scaled material parameters and frequency:
 \beq \label{farfield3}\ds \bm{A}_{\infty}
\left[ \mu \circ {\Psi}_{\frac{1}{\rho}}, \epsilon \circ
{\Psi}_{\frac{1}{\rho}} , \omega \right] = \bm{A}_{\infty} [ \mu,
\epsilon, \rho \omega]. \eeq

To see this, consider $(\bE,\bH)$ which satisfies
\begin{equation*}
 \ \left \{
 \begin{array}{ll}
\ds\left( \nabla\times{\bE}\right) (\Bx) = i\omega \bigr( \mu \circ {\Psi}_{\frac{1}{\rho}}\bigr)(\Bx) {\bH}(\Bx) \quad &\mbox{for } \Bx \in \RR^3 \setminus \overline{B_{\rho}},\\
\ds\left( \nabla\times{\bH}\right) (\Bx) = -i\omega\bigr( \epsilon \circ {\Psi}_{\frac{1}{\rho}}\bigr)(\Bx){\bE}(\Bx) \quad & \mbox{for } \Bx \in \RR^3 \setminus \overline{B_{\rho}},\\
\ds\hat{\Bx} \times \bE(\Bx) = 0 \quad& \mbox{on } \p B_{\rho},\\
(\bE-\bE^i, \bH- \bH^i) \mbox{ is radiating,}&
 \end{array}
 \right .
 \end{equation*}
with the incident wave $\bE^i(\Bx) =  e^{i \mathbf{k} \cdot \Bx}\hat{\mathbf{c}}$
and $\bH^i= \frac{1}{i\omega \mu_0}\nabla\times{\bE}^i$ with
$\mathbf{k} \cdot \hat{\mathbf{c}} = 0$ and $|\mathbf{k}|=k_0$.
Here $B_{\rho}$ is the ball of radius $\rho$ centered at the origin.
Set $\By = \frac{1}{\rho} \Bx$ and define
$$ \bigr(
\widetilde{\bE}(\By), \widetilde{\bH} (\By)\bigr):= \Bigr( \bigr(
\bE \circ {\Psi}_{\frac{1}{\rho}}^{-1} \bigr) (\By) , \bigr( \bH
\circ {\Psi}_{\frac{1}{\rho}}^{-1} \bigr) (\By) \Bigr) =\Bigr(
\bigr( \bE \circ {\Psi}_{\rho} \bigr) (\By) , \bigr( \bH \circ
{\Psi}_{\rho} \bigr) (\By) \Bigr) $$
 and
 $$ \bigr(
\widetilde{\bE}^i(\By), \widetilde{\bH}^i (\By)\bigr):=\Bigr(
\bigr( \bE^i \circ {\Psi}_{\rho} \bigr) (\By) , \bigr( \bH^i
\circ {\Psi}_{\rho} \bigr) (\By) \Bigr). $$
 Then, we have
 \begin{equation*}
 \ \left \{
 \begin{array}{ll}
\ds\left( \nabla_\By\times\widetilde{\bE}\right) (\By) = i\rho\omega  \mu (\By) \widetilde{\bH}(\By) \quad &\mbox{for } \By \in \RR^3\setminus\overline{B_1}\\
\ds\left( \nabla_\By\times\widetilde{\bH}\right) (\By) = -i\rho\omega \epsilon (\By)\widetilde{\bE}(\By) \quad& \mbox{for } \By \in \RR^3\setminus\overline{B_1},\\
\ds\hat{\By} \times \widetilde{\bE}(\By) = 0 \quad &\mbox{on } \p B_1,\\
\ds(\widetilde{\bE}-\widetilde{\bE}^i,\widetilde{\bH}-\widetilde{\bH}^i) \mbox{ is radiating}
 \end{array}
 \right .
 \end{equation*}
 Remind that the scattered wave can be represented using the scattering amplitude as follows:
$$
(\bE-\bE^i) (\Bx) \sim \frac{e^{i k_0 |\Bx|}}{k_0 |\Bx|}
\bm{A}_{\infty} \left[ \mu \circ {\Psi}_{\frac{1}{\rho}}, \epsilon
\circ {\Psi}_{\frac{1}{\rho}} , \omega
\right](\mathbf{c},\hat{\mathbf{k}};\hat{\Bx}) \quad \mbox{as }
|\Bx| \rightarrow \infty,
$$
and
$$
(\widetilde{\bE}-\widetilde{\bE}^i) (\By) \sim \frac{e^{i k_0 \rho
|\By|}}{k_0 \rho |\By|} \bm{A}_{\infty} \left[ \mu , \epsilon , \omega
\right](\mathbf{c},\hat{\mathbf{k}};\hat{\Bx}) \quad \mbox{as }
|\By| \rightarrow \infty.
$$
Since the left-hand sides of the previous equations are coincide, we have \eqnref{farfield3}.

Suppose that $(\mu, \epsilon) $ is a S-vanishing structure of
order $N$ at low frequencies as in Section \ref{mlstructure}. From
\eqnref{farfield1} and \eqnref{farfield3}, we have \beq
\label{far_field_rho_N_1} \bm{A}_{\infty} \left[ \mu \circ
{\Psi}_{\frac{1}{\rho}}, \epsilon \circ \Psi_{\frac{1}{\rho}} ,
\omega \right](\mathbf{c},\hat{\mathbf{k}};\hat{\Bx}) =
o(\rho^{2N+1}) \eeq Then, we define the diffeomorphism $F_{\rho}$
as
\begin{equation*} F_{\rho}(\Bx):= \begin{cases}
\ds \Bx \quad&\mbox{for } |\Bx|\geq2,\\
\ds\Bigr(\frac{3-4\rho}{2(1-\rho)}+\frac{1}{4(1-\rho)}|\Bx|\Bigr)\frac{\Bx}{|\Bx|} \quad&\mbox{for }2\rho\leq|\Bx|\leq 2,\\
\ds\Bigr(\frac{1}{2}+\frac{1}{2\rho}|\Bx|\Bigr)\frac{\Bx}{|\Bx|} \quad&\mbox{for }\rho\leq|\Bx|\leq 2\rho,\\
\ds\frac{\Bx}{\rho} &\mbox{for }|\Bx|\leq \rho.
\end{cases}
\end{equation*}
We then get from \eqnref{far_field_rho_N_1} and Lemma \ref{change_of_var} the main result of this paper.
\begin{theorem}\label{thm:main}
If $(\mu, \epsilon)$ is a S-vanishing structure of order $N$ at
low frequencies, then there exists $\rho_0$ such that
$$
\bm{A}_{\infty} \left[ (F_{\rho})_*( \mu \circ
\Psi_{\frac{1}{\rho}}),(F_{\rho})_* (\epsilon \circ
\Psi_{\frac{1}{\rho}}) , \omega
\right](\mathbf{c},\hat{\mathbf{k}};\hat{\Bx}) = o(\rho^{2N+1}),
$$ for all $\rho \leq  \rho_0$, uniformly in
$(\hat{\mathbf{k}},\hat{\Bx})$.
\end{theorem}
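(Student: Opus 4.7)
The plan is to chain three ingredients already in hand: the transformation-optics invariance of the scattering amplitude (Lemma \ref{change_of_var}), the scaling relation \eqnref{farfield3} between the small-target problem and the low-frequency problem, and the S-vanishing estimate \eqnref{farfield1}. No new analytical input is required; the argument is essentially a composition of these three facts.

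Concretely I would proceed in three steps. First, observe that by construction $F_\rho$ is a bi-Lipschitz radial diffeomorphism of $\RR^3$ that equals the identity for $|\Bx|\geq 2$, while the material $(\mu\circ\Psi_{1/\rho},\epsilon\circ\Psi_{1/\rho})$ together with its PEC core is supported in $\{|\Bx|\leq 2\rho\}\subset\{|\Bx|\leq 2\}$; applying Lemma \ref{change_of_var} therefore gives
\[
\bm{A}_\infty\bigl[(F_\rho)_*(\mu\circ\Psi_{1/\rho}),(F_\rho)_*(\epsilon\circ\Psi_{1/\rho}),\omega\bigr](\mathbf{c},\hat{\mathbf{k}};\hat{\Bx})=\bm{A}_\infty\bigl[\mu\circ\Psi_{1/\rho},\epsilon\circ\Psi_{1/\rho},\omega\bigr](\mathbf{c},\hat{\mathbf{k}};\hat{\Bx}).
\]
Second, the scaling identity \eqnref{farfield3} rewrites the right-hand side as $\bm{A}_\infty[\mu,\epsilon,\rho\omega](\mathbf{c},\hat{\mathbf{k}};\hat{\Bx})$. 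Third, since $(\mu,\epsilon)$ is an S-vanishing structure of order $N$ at low frequencies, \eqnref{farfield1} delivers the bound $o(\rho^{2N+1})$ for all $\rho\leq\rho_0$. Composing the three equalities concludes the proof.

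The only step requiring more than bookkeeping is the uniformity of the $o(\rho^{2N+1})$ bound in $(\hat{\mathbf{k}},\hat{\Bx})$, which is really the content of \eqnref{farfield1}. I would truncate the multipole expansion \eqnref{aexpansion} at $n=N$: the contributions from $1\leq n\leq N$ are $o(\rho^{2N+1})$ by the defining condition \eqnref{Sstructure} of an S-vanishing structure, and the coefficients $\alpha_{n,m},\beta_{n,m}$ in \eqnref{far_field} depend on $(\hat{\mathbf{k}},\hat{\Bx})$ only through the quantities $\mathbf{V}_{p,q}(\hat{\mathbf{k}})\cdot\mathbf{c}$ and $\mathbf{U}_{p,q}(\hat{\mathbf{k}})\cdot\mathbf{c}$, which are bounded uniformly on the unit sphere. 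The tail $n>N$ is controlled by Lemma \ref{Westimate2}: the estimate $|W_{(n,m)(p,q)}^{\bullet,\bullet}[\epsilon,\mu,\rho\omega]|\leq C^{n+p}\rho^{n+p+1}/(n^n p^p)$ combined with the $1/\sqrt{p(p+1)}$ factor in the plane-wave decomposition \eqnref{far_field} gives a remainder of order $\rho^{2N+3}$, absolutely summable in $(n,p)$ and independent of $(\hat{\mathbf{k}},\hat{\Bx})$.

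A final housekeeping remark is that Lemma \ref{change_of_var}, though stated for smooth Maxwell solutions on all of $\RR^3$, extends to include the PEC core $\{|\Bx|\leq\rho\}$ inside the support of the medium: the boundary condition $\bm{\nu}\times\bE=0$ is preserved by the pullback under $F_\rho$ because tangentiality of a vector to a surface is invariant under diffeomorphisms. This is the only aspect not explicit in the statement of Lemma \ref{change_of_var}, and I expect it to be the main (though quite mild) obstacle in writing the argument rigorously. The rest is algebraic composition of previously established identities.
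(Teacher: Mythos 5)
Your proposal is correct and follows essentially the same route as the paper: the theorem is obtained by composing Lemma \ref{change_of_var}, the scaling identity \eqnref{farfield3}, and the S-vanishing estimate \eqnref{farfield1}, exactly as you describe. Your additional remarks on the uniformity in $(\hat{\mathbf{k}},\hat{\Bx})$ via Lemma \ref{Westimate2} and on the compatibility of the PEC condition with the pullback simply make explicit details the paper leaves implicit.
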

Remark that the cloaking structure $\bigr((F_{\rho})_*( \mu \circ
\Psi_{\frac{1}{\rho}}),(F_{\rho})_* (\epsilon \circ
\Psi_{\frac{1}{\rho}}) \bigr)$ in Theorem \ref{thm:main} satisfies the PEC boundary condition on $|\Bx|=1$.

\section{Conclusion}
We have shown near-cloaking examples for the Maxwell equation. We
have designed a cloaking device that achieves enhanced cloaking
effect based on the method of \cite{AKLL1,AKLL2}  to
electromagnetic scattering problems. Any target placed inside the
cloaking device has an approximately zero scattering amplitude.
Such cloaking device is obtained by the blow up using the
transformation optics of a multi-coated inclusion with PEC
boundary condition. The cloaking device has anisotropic
permittivity and permeability parameters.

\end{document}